\newcommand{\eps}{\varepsilon}
\newcommand{\R}{\mathbb R}
\newcommand{\N}{\mathbb N}
\newcommand{\then}{\Longrightarrow}
\newcommand{\J}{{\cal J}}
\newcommand\e{{\rm e}}
\DeclareMathOperator*{\esssup}{ess\ sup}
\DeclareMathOperator*{\essinf}{ess\ inf}
\DeclareMathOperator{\supp}{supp}
\newtheorem{corollary}{Corollary}[section]
\newtheorem{theorem}[corollary]{Theorem}
\newtheorem{lemma}[corollary]{Lemma}
\newtheorem{proposition}[corollary]{Proposition}
\theoremstyle{definition}
\newtheorem{definition}[corollary]{Definition}
\newtheorem{remark}[corollary]{Remark}
\numberwithin{equation}{section}
\begin{document}

\title{{\bf Bounded solutions for quasilinear 
modified Schrödinger equations\footnote{The research that led 
to the present paper was partially supported 
by MIUR--PRIN project ``Qualitative and quantitative aspects of nonlinear PDEs''
 (2017JPCAPN\underline{\ }005) and by {\sl Fondi di Ricerca di Ateneo} 2017/18
 ``Problemi differenziali non lineari''. All the authors are members of the Research Group INdAM--GNAMPA.}}}

\author{Anna Maria Candela, Addolorata Salvatore and Caterina Sportelli\\
{\small Dipartimento di Matematica}\\
{\small Universit\`a degli Studi di Bari Aldo Moro} \\
{\small Via E. Orabona 4, 70125 Bari, Italy}\\
{\small \it annamaria.candela@uniba.it}\\
{\small \it addolorata.salvatore@uniba.it}\\
{\small \it caterina.sportelli@uniba.it}}
\date{}

\maketitle

\begin{abstract}
In this paper we establish a new existence result for the quasilinear elliptic problem 
\[
-{\rm div}(A(x,u)|\nabla u|^{p-2}\nabla u) +\frac1p A_t(x,u)|\nabla u|^p 
+ V(x)|u|^{p-2} u = g(x,u)\quad\mbox{ in } \R^N,
\]
with $N\ge 2$, $p>1$ and $V:\R^N\to\R$ suitable measurable positive function,
which generalizes the modified Schrödinger equation. 
Here, we suppose that $A:\R^N\times\R\rightarrow\R$ is a $\mathcal{C}^{1}$--Caratheodory function  
such that $A_t(x,t) = \frac{\partial A}{\partial t} (x,t)$ and a given Carath\'eodory function 
$g:\R^N\times\R\rightarrow\R$ has a subcritical growth and satisfies the Ambrosetti--Rabinowitz condition. 

Since the coefficient of the principal part depends also on the solution itself, 
we study the interaction of two different norms in a suitable Banach space so
to obtain a ``good'' variational approach.
Thus, by means of approximation arguments on bounded sets 
we can state the existence of a nontrivial weak bounded solution. 
\end{abstract}

\noindent
{\it \footnotesize 2020 Mathematics Subject Classification}. {\scriptsize 35J62, 35J92, 47J30, 35Q55, 58E30}.\\
{\it \footnotesize Key words}. {\scriptsize Modified Schr\"odinger equation, quasilinear elliptic equation, unbounded domain,
weak bounded nontrivial solution, weak Cerami--Palais--Smale condition, 
approximating problems, Ambrosetti--Rabinowitz condition}.


\section{Introduction} \label{secintroduction}

In this paper we investigate the existence of weak bounded solutions 
for the quasilinear elliptic problem
\begin{equation} \label{problem}
-{\rm div}(A(x,u)|\nabla u|^{p-2}\nabla u) +\frac1p A_t(x,u)|\nabla u|^p + V(x)|u|^{p-2} u = g(x,u)\quad\mbox{ in } \R^N,
\end{equation}
with $N\ge 2$, $p>1$, where $A:\R^N\times\R\to\R$ is a $\mathcal{C}^1$--Carath\'eodory function with partial derivative 
$A_t(x,t) = \frac{\partial A}{\partial t} (x,t)$, potential $V:\R^N\to\R$ is a suitable measurable function 
and $g:\R^N\times\R\to\R$ is a given Carath\'eodory function.

Special examples of problem \eqref{problem} are related to the existence of solitary waves
for the quasilinear Schrödinger equation
\begin{equation}\label{Scr}
iz_t = -\Delta z + V(x) z -h(|z|^2)z -\Delta(l(|z|^2)) l^{\prime}(|z|^2)z \quad\mbox{ in } \R^N,
\end{equation}
where $V:\R^N\to\R$ is a given potential, $l$, $h: \R \to \R$ are real functions
and the solution $z=z(x,t)$ is a complex function in $\R^N\times\R$. 
In fact, by using the ansatz $z(x,t) = {\rm e}^{-iEt}u(x)$ in \eqref{Scr} with $E\in\R$,
the unknown strictly positive real function $u(x)$ is a solution of the 
corresponding elliptic problem
\[
-\Delta u - \Delta(l(u^2)) l^{\prime}(u^2)u +V(x) u = Eu + h(u^2) u \quad\mbox{ in } \R^N,
\]
often referred as modified Schrödinger equation, 
which matches with \eqref{problem}, but taking $p=2$, for suitable choices of function $l(s)$.
And, again, the structure of the real term $l(s)$ allows one 
to use quasilinear equation \eqref{Scr} for describing several physical phenomena 
such as the self--channeling of a high--power ultrashort laser, 
or also some problems which arise in plasma physics, fluid mechanics, 
mechanics and in the condensed matter theory (see \cite{Zhang} and references therein or also \cite{CaSp2022} for some model problems). 

On the other hand, if $A(x,t)$ is a constant, 
equation \eqref{problem} turns into the $p$--Laplacian equation 
\begin{equation}\label{noA}
-\Delta_p u +V(x)|u|^{p-2}u = g(x, u) \quad\mbox{ in } \R^N
\end{equation}
which has been widely investigated starting from the pioneer papers \cite{BW,Ra}.
Indeed, it is well known that \eqref{noA} has a variational
structure but there is a lack of compactness as
such a problem is settled in the whole space $\R^N$
and classical variational tools cannot work easily; 
thus, suitable assumptions on potential $V(x)$ are required 
(see, e.g., \cite{LZ} and references therein).

Clearly, the same difficulties still arise 
when we deal with the more general problem \eqref{problem}, but 
the presence of a coefficient which depends on the solution itself makes
the variational approach more complicated.
In fact, the ``natural'' functional associated to \eqref{problem} is 
\[
\J(u) = \frac1p \int_{\R^N} A(x, u)|\nabla u|^p dx +\frac1p \int_{\R^N} V(x) |u|^p dx -\int_{\R^N} G(x, u) dx,
\]
with $G(x,t) = \int_0^t g(x,s) ds$, 
but, even if $A(x,t)$ is a smooth strictly positive bounded function,
if $A_t(x,t) \not\equiv 0$ functional $\J$ is well defined in $W^{1,p}(\R^N)$
while it is G\^ateaux differentiable only along directions in 
$W^{1,p}(\R^N) \cap L^\infty(\R^N)$. 

Such a problem has been overcome in different ways, for example 
by introducing suitable definitions
of critical point for non--differentiable functionals 
(see, e.g., \cite{AB1,CD,CDM}), and in the whole Euclidean space $\R^N$ 
some existence results have been proved
by using nonsmooth techniques (see, e.g., \cite{AG}), 
by means of constrained minimization arguments (see \cite{LW,LWW2,PSW}),
by introducing a suitable change of variable 
but only if $A(x,t)$ has a very special form, in particular it is independent of $x$
(see, e.g., \cite{CJ,LWW,ShW} and also \cite{SC} and references therein) or by making use of an approximation scheme via $q$--Laplacian regularization (see \cite{LLW}). It is worth noting that the last mentioned technique turns out to be attractive also for studying the existence of multiple solutions of quasilinear equations of general forms to which the idea of changing variables does not apply (see \cite{LW2}) and we think it should be interesting for possible future investigations to find out the right perturbation which may allow us to apply such a regularization approach to our more general problem \eqref{problem}. 

Along with the aforementioned ideas, 
such a problem has been addressed also 
with a different approach which has been developed 
in \cite{CP1,CP2,CP3}.
More precisely, under some quite
natural conditions, we are able to prove 
that functional $\J$ is $C^1$ in the 
Banach space $X = W^{1,p}(\R^N) \cap L^\infty(\R^N)$
equipped with the intersection norm $\|\cdot\|_X$ (see Proposition \ref{C1}),
then some abstract results can be applied in this setting as long as a weaker ``compactness''
condition is provided (see Definition \ref{wCPS}). 

Recently, if $V(x) \equiv 1$ this approach has been used for proving
the existence of radial bounded solutions of \eqref{problem} 
if $A(x,t)$ is quite general but
all the involved functions are radially symmetric (see \cite{CS2020})
or they are 1--periodic with respect to $x$ (see \cite{CPSpreprint}).

Here, we get rid of both the symmetric and the periodic assumption 
and, as in the pioneer paper \cite{BF} which deals with a linear Schrödinger equation
with $A(x,t)\equiv 1$ and $p=2$
(see also \cite{AS} for the nonlinear case),
we assume that the potential $V:\R^N\to\R$ is a Lebesgue measurable function such that
\begin{equation} \label{assV}
\essinf_{x\in \R^N} V(x)>0 \qquad\mbox{ and }\qquad 
\lim_{|x|\to +\infty}\int_{B_1(x)}\frac{1}{V(y)}\ dy\ = \ 0,
\end{equation}
where $B_1(x)$ denotes the unitary sphere of $\R^N$ centered in $x$.

Such assumptions \eqref{assV} have already been used in \cite{BCS2015,BCS2016} 
for investigating the quasilinear equation \eqref{noA}
as they ensure a compactness embedding in suitable weighted Lebesgue spaces on $\R^N$
(see \cite[Theorem 3.1]{BF}).
A similar compact embedding has been stated in \cite{BPW, BW} under suitable conditions on the level sets of $V$ (for a comparison with assumption \eqref{assV} see \cite{AS}).

Anyway, in the more general setting of problem \eqref{problem} this is not enough 
and also an approximation argument is considered, i.e., 
for each $k\in\N$ we introduce the approximating quasilinear problem
\begin{equation}   \label{pbk}
\left\{
\begin{array}{ll}
-{\rm div}(A(x,u)|\nabla u|^{p-2}\nabla u) +\frac1p A_t(x,u)|\nabla u|^p = g(x,u) - V(x)|u|^{p-2} u 
 &\hbox{ in $B_k$,}\\ [10pt]
u = 0 &\hbox{ on $\partial B_k$,}
\end{array}
\right.
\end{equation}
with $B_k=\{x\in\R^N : |x| < k\}$, and the related functional $\J_{B_k}$
on the ``right''  Banach space $X_{B_k}$ (see definition \eqref{funOm}, respectively \eqref{Xlim}).
Then, if ``good'' assumptions are satisfied (see Section \ref{mainsection})
for all $k \in \N$ a weak bounded solution $u_k$ of problem \eqref{pbk} exists
and a nontrivial solution for equation \eqref{problem} is constructed as a 
suitable limit of sequence $(u_k)_k$. 

Thus, the first step is solving the given equation in bounded domains by following the ideas 
developed in \cite{CP1,CP2}, then we pass to the limit in a ``weak sense'' so to find a weak
bounded solution of \eqref{problem} which has to be nontrivial (a similar approach is used in \cite{CPSpreprint}).

We note that, since our variational principle requires that $X$ is contained in $L^{\infty}(\R^N)$ (see 
definition \eqref{Xdefn}), in some sense the ``transition'' through bounded domain is mandatory
as every ``limit point'' has to be a bounded function and the technical lemma, which allows it, 
holds only on bounded domains (see  Lemma \ref{tecnico}).

We note that our main theorem requires not only \eqref{assV} but also $V(x)$ bounded on bounded sets,
some hypotheses which describe suitable interaction properties between $A(x,t)$ and its derivative $A_t(x,t)$
and that $g(x,t)$ has a subcritical growth and satisfies an
Ambrosetti--Rabinowitz type condition and a suitable assumption  
while approaching the origin. Thus, in order to not weigh this introduction down
with too many details, we prefer to specify each
hypothesis when required and to state 
our main result at the beginning of Section \ref{mainsection} 
(see Theorem \ref{ThExist}).    

This paper is organized as follows.
 
In Section \ref{abstractsection} we introduce the abstract framework 
stating the weak Cerami--Palais--Smale condition 
and a generalized version of the classical Mountain Pass Theorem. 
In Section \ref{variationalsection} we introduce some preliminary assumptions on the functions 
$A(x,t)$, $G(x,t)$ and on the potential $V(x)$, which allow us to give a variational principle 
for problem \eqref{problem}. 
In Section \ref{mainsection} we provide some further hypotheses on the involved functions,
our main result is stated
and some geometric properties are pointed out. 
Then, in Section \ref{sectionbdd} we investigate the existence 
of weak bounded solutions of problem \eqref{pbk}
and, finally, in Section \ref{finalsection} we prove our main theorem.


\section{Abstract setting}\label{abstractsection}

Throughout this section, we assume that:
\begin{itemize}
\item $(X, \|\cdot\|_X)$ is a Banach space with dual 
$(X',\|\cdot\|_{X'})$;
\item $(W,\|\cdot\|_W)$ is a Banach space such that
$X \hookrightarrow W$ continuously, i.e. $X \subset W$ and a constant $\sigma_0 > 0$ exists
such that
\[
\|\xi\|_W \ \le \ \sigma_0\ \|\xi\|_X\qquad \hbox{for all $\xi \in X$;}
\]
\item $J : {\cal D} \subset W \to \R$ and $J \in C^1(X,\R)$ with $X \subset {\cal D}$.
\end{itemize}

In order to avoid any
ambiguity and simplify, when possible, the notation, 
from now on by $X$ we denote the space equipped with
its given norm $\|\cdot\|_X$ while, if the norm $\Vert\cdot\Vert_{W}$ is involved,
we write it explicitly.

For simplicity, taking $\beta \in \R$, we say that a sequence
$(\xi_n)_n\subset X$ is a {\sl Cerami--Palais--Smale sequence at level $\beta$},
briefly {\sl $(CPS)_\beta$--sequence}, if
\[
\lim_{n \to +\infty}J(\xi_n) = \beta\quad\mbox{and}\quad 
\lim_{n \to +\infty}\|dJ\left(\xi_n\right)\|_{X'} (1 + \|\xi_n\|_X) = 0.
\]
Moreover, $\beta$ is a {\sl Cerami--Palais--Smale level}, briefly {\sl $(CPS)$--level}, 
if there exists a $(CPS)_\beta$ -- sequence.

As $(CPS)_\beta$ -- sequences may exist which are unbounded in $\|\cdot\|_X$
but converge with respect to $\|\cdot\|_W$,
we have to weaken the classical Cerami--Palais--Smale 
condition in a suitable way according to the ideas already developed in 
previous papers (see, e.g., \cite{CP3}).  

\begin{definition} \label{wCPS}
The functional $J$ satisfies the
{\slshape weak Cerami--Palais--Smale 
condition at level $\beta$} ($\beta \in \R$), 
briefly {\sl $(wCPS)_\beta$ condition}, if for every $(CPS)_\beta$--sequence $(\xi_n)_n$,
a point $\xi \in X$ exists, such that 
\begin{description}{}{}
\item[{\sl (i)}] $\displaystyle 
\lim_{n \to+\infty} \|\xi_n - \xi\|_W = 0\quad$ (up to subsequences),
\item[{\sl (ii)}] $J(\xi) = \beta$, $\; dJ(\xi) = 0$.
\end{description}
If $J$ satisfies the $(wCPS)_\beta$ condition at each level $\beta \in I$, $I$ real interval, 
we say that $J$ satisfies the $(wCPS)$ condition in $I$.
\end{definition}

Since Definition \ref{wCPS} allows one to prove a Deformation Lemma (see \cite[Lemma 2.3]{CP3}), 
thus the following generalization of the Mountain Pass Theorem 
\cite[Theorem 2.1]{AR} can be stated (for the proof, see \cite[Theorem 1.7]{CP3}
with remarks in \cite[Theorem 2.2]{CS2020}).

\begin{theorem}[Mountain Pass Theorem]
\label{mountainpass}
Let $J\in C^1(X,\R)$ be such that $J(0) = 0$
and the $(wCPS)$ condition holds in $\R$.
Moreover, assume that two constants
$\varrho$, $\alpha^* > 0$ and a point $e \in X$ exist such that
\[
u \in X, \; \|u\|_W = \varrho\quad \then\quad J(u) \ge \alpha^*,
\]
\[
\|e\|_W > \varrho\qquad\hbox{and}\qquad J(e) < \alpha^*.
\]
Then, $J$ has a critical point $u_X \in X$ such that 
\[
J(u_X) = \inf_{\gamma \in \Gamma} \sup_{s\in [0,1]} J(\gamma(s)) \ge \alpha^*
\]
with $\Gamma = \{ \gamma \in C([0,1],X):\, \gamma(0) = 0,\; \gamma(1) = e\}$.
\end{theorem}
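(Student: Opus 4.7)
The plan is to follow the standard Ambrosetti--Rabinowitz scheme, but adapted to the two--norm framework in which the geometric hypotheses are expressed in $\|\cdot\|_W$ while the functional is only $C^1$ on $X$. Define the candidate minimax value
\[
c \;=\; \inf_{\gamma \in \Gamma}\,\sup_{s\in [0,1]} J(\gamma(s)),
\]
and aim to prove that $c \ge \alpha^*$ and that $c$ is attained at a critical point $u_X \in X$ of $J$.

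First I would establish the lower bound $c \ge \alpha^*$. Given any $\gamma \in \Gamma$, the map $s \mapsto \|\gamma(s)\|_W$ is continuous on $[0,1]$: indeed $\gamma$ is continuous into $X$ and the continuous embedding $X \hookrightarrow W$ guarantees $\|\gamma(s) - \gamma(s_0)\|_W \le \sigma_0 \|\gamma(s) - \gamma(s_0)\|_X$. Since $\|\gamma(0)\|_W = 0 < \varrho$ and $\|\gamma(1)\|_W = \|e\|_W > \varrho$, the intermediate value theorem produces $s^* \in (0,1)$ with $\|\gamma(s^*)\|_W = \varrho$, whence $\sup_s J(\gamma(s)) \ge J(\gamma(s^*)) \ge \alpha^*$. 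Taking the infimum over $\gamma$ yields $c \ge \alpha^*$.

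Next I would produce a $(CPS)_c$--sequence by a standard deformation/Ekeland argument applied to $\Gamma$ endowed with the uniform $\|\cdot\|_X$ distance. Assume by contradiction that no such sequence of ``almost critical'' points converges. Applying the Deformation Lemma of \cite{CP3} (Lemma 2.3) at level $c$, there would exist $\varepsilon \in (0, \alpha^*/2)$ and a continuous deformation $\eta \colon [0,1]\times X \to X$ that reduces the sublevel set $J^{c+\varepsilon}$ into $J^{c-\varepsilon}$ while leaving the set $\{J \le \alpha^*/2\}$ fixed. Choosing $\gamma \in \Gamma$ with $\sup_s J(\gamma(s)) < c + \varepsilon$, the deformed path $s \mapsto \eta(1, \gamma(s))$ still belongs to $\Gamma$ (since $J(0) = 0$ and $J(e) < \alpha^*$ ensure that the endpoints lie in the fixed region), yet its supremum is at most $c - \varepsilon$, contradicting the definition of $c$. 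Hence a $(CPS)_c$--sequence $(\xi_n)_n$ exists, and by $(wCPS)_c$ we obtain $u_X \in X$ with $J(u_X) = c$ and $dJ(u_X) = 0$. Since $c \ge \alpha^* > 0 = J(0)$, the critical point $u_X$ is automatically nontrivial.

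The main technical obstacle here is not the two geometric steps, which are routine once the right norm $\|\cdot\|_W$ is used, but the construction of the deformation $\eta$ acting on $X$: the pseudo--gradient vector field must live in $X$ and respect $\|\cdot\|_{X'}$ estimates, whereas any compactness of $(CPS)_c$--sequences is only available through the weaker norm $\|\cdot\|_W$. This is exactly the mismatch that Definition \ref{wCPS} is engineered to handle, and is the reason one appeals to the version of the deformation lemma proved in \cite{CP3} rather than the classical one; once that machinery is in place the rest of the argument is the familiar minimax procedure.
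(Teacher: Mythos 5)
Your proposal is correct and takes essentially the same route the paper relies on: the paper does not prove Theorem \ref{mountainpass} in-line but refers to the deformation-lemma-based argument of \cite[Theorem 1.7]{CP3} (with the remarks in \cite[Theorem 2.2]{CS2020}), which is exactly your scheme of the $\|\cdot\|_W$ intermediate-value step giving $c\ge\alpha^*$, the minimax/deformation contradiction, and the $(wCPS)_c$ condition to turn a $(CPS)_c$--sequence into a critical point at level $c$. One small adjustment: the region the deformation must leave fixed should be taken at a level strictly between $\max\{J(0),J(e)\}$ and $c$ (possible because $J(e)<\alpha^*\le c$), rather than $\{J\le\alpha^*/2\}$, since $J(e)<\alpha^*$ does not guarantee $J(e)\le\alpha^*/2$.
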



\section{The variational principle}  \label{variationalsection}

Let $\N = \{1,2,\dots\}$ be the set of the strictly positive integers
and, taking any $\Omega$ open subset of $\R^N$, $N\ge 2$, we denote by:
\begin{itemize}
\item $B_R(x) = \{y\in\R^N : |y-x|< R\}$ the open ball with center in 
$x\in \R^N$ and radius $R>0$;
\item $|D|$ the usual $N$--dimensional Lebesgue measure of a measurable set $D$ in $\R^N$;
\item $(L^r(\Omega),|\cdot|_{\Omega,r})$ the classical Lebes\-gue space with
norm $|u|_{\Omega,r} = \left(\int_{\Omega}|u|^r dx\right)^{1/r}$ if $1\le r<+\infty$;
\item $(L^\infty(\Omega),|\cdot|_{\Omega,\infty})$ the space of Lebesgue--measurable 
essentially bounded functions endowed with norm $\displaystyle |u|_{\Omega,\infty} = \esssup_{\Omega} |u|$;
\item $W^{1,p}(\Omega)$ and $W_0^{1,p}(\Omega)$ the classical Sobolev spaces 
both equipped with the standard norm 
$\|u\|_\Omega = (|\nabla u|_{\Omega,p}^p +|u|_{\Omega,p}^p)^{\frac1p}$
 if $1 \le p < +\infty$.
\end{itemize}
Moreover, if $V:\R^N\to\R$ is a measurable function such that
\begin{itemize}
\item[$(V_1)$] $\qquad\qquad\displaystyle\essinf_{x\in\R^N} V(x)>0$,
\end{itemize}
we denote by
\begin{itemize}
\item $(L_V^r(\Omega),|\cdot|_{\Omega,V,r})$, if $1\le r<+\infty$, the weighted Lebesgue space with
\begin{equation}   \label{LVnorm}
L_V^r(\Omega)=\left\{u\in L^r(\Omega): \int_{\Omega} V(x) |u|^r dx <+\infty\right\},\quad
|u|_{\Omega,V,r} =\left(\int_{\Omega} V(x) |u|^r dx\right)^{\frac1r};
\end{equation}
\item $W_{V}^{1,p}(\Omega)$ and $W_{0,V}^{1,p}(\Omega)$, if $1\le p<+\infty$, 
the weighted Sobolev spaces
\[
\begin{split}
W_{V}^{1,p}(\Omega) = &\left\{ u\in W^{1,p}(\Omega): \ \int_{\Omega} V(x) |u|^p dx <+\infty\right\},\\
W_{0,V}^{1,p}(\Omega) = &\left\{ u\in W_0^{1,p}(\Omega): \ \int_{\Omega} V(x) |u|^p dx <+\infty\right\}
\end{split}
\]
endowed with the norm
\begin{equation}   \label{weightnorm}
\|u\|_{\Omega,V} = (|\nabla u|_{\Omega,p}^p +|u|_{\Omega,V,p}^p)^{\frac1p}.
\end{equation}
\end{itemize}

For simplicity, we put $B_R = B_R(0)$ for the open ball with center in the origin 
and radius $R>0$ and, if $\Omega = \R^N$, we avoid to write the set in the norms, i.e., 
we put
\begin{itemize}
\item $|\cdot|_{r} = |\cdot|_{\R^N,r}$ for the norm in $L^r(\R^N)$, for all $1 \le r \le +\infty$;
\item $|\cdot|_{V,r} = |\cdot|_{\R^N,V,r}$ for the norm in $L_V^r(\R^N)$, for all $1 \le r < +\infty$;
\item $\|\cdot\| = \|\cdot\|_{\R^N}$ for the norm in $W^{1,p}(\R^N) = W^{1,p}_0(\R^N)$;
\item $\|\cdot\|_{V} = \|\cdot\|_{\R^N,V}$ for the norm in 
$W_V^{1,p}(\R^N) = W_{0,V}^{1,p}(\R^N)$.
\end{itemize}  

From now on, let $V:\R^N\to\R$ be a measurable function which satisfies not only condition $(V_1)$
but also
\begin{itemize}
\item[$(V_2)$] $\qquad\qquad\displaystyle\lim_{|x|\to +\infty}\int_{B_1(x)}\frac{1}{V(y)}\ dy\ =\ 0$.
\end{itemize}

\begin{remark} \label{Remb}
If potential $V(x)$ satisfies assumption $(V_1)$, then the following
continuous embeddings hold:
\begin{equation}   \label{LVem}
L_V^r(\R^N)\hookrightarrow L^r(\R^N) \quad \hbox{for all $1\le r<+\infty$,}
\end{equation}
\begin{equation}  \label{WVem}
W_V^{1, p}(\R^N)\hookrightarrow W^{1, p}(\R^N)
\quad \hbox{for all $1\le p<+\infty$.}
\end{equation}
\end{remark}

From Remark \ref{Remb} and Sobolev Embedding Theorems, we deduce the following result 
(for the compact embeddings, see \cite[Theorem 3.1]{BF}). 

\begin{theorem}\label{embed}
Let $V:\R^N\to\R$ be a Lebesgue measurable function satisfying assumption $(V_1)$. 
Then, the following continuous embeddings hold:
\begin{itemize}
\item if $p<N$ then
\begin{equation}  \label{cont1}
W_V^{1,p}(\R^N) \hookrightarrow L^r(\R^N) \quad\mbox{ for any }\; p\le r\le \frac{Np}{N-p};
\end{equation}
\item if $p=N$ then
\begin{equation}  \label{cont2}
W_V^{1, p}(\R^N)\hookrightarrow L^r(\R^N) \quad\mbox{ for any }\; p\le r<+\infty;
\end{equation}
\item if $p>N$ then
\begin{equation}  \label{cont3}
W_V^{1, p}(\R^N)\hookrightarrow L^{r}(\R^N)  \quad\mbox{ for any }\; p\le r\le+\infty.
\end{equation}
\end{itemize}
Furthermore, if assumption $(V_2)$ also occurs, the compact embedding
\begin{equation}    \label{comp}
W_V^{1, p}(\R^N)\hookrightarrow\hookrightarrow L^r(\R^N) \quad\mbox{ for any } p\le r <p^*
\end{equation}
holds, with
\[
p^*=\begin{cases}
\frac{Np}{N-p} &\hbox{ if } p<N,\\
+\infty &\hbox{ if } p\ge N.
\end{cases}
\]
\end{theorem}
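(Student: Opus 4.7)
The continuous embeddings \eqref{cont1}, \eqref{cont2}, \eqref{cont3} are immediate: by Remark \ref{Remb}, in particular \eqref{WVem}, the inclusion $W_V^{1,p}(\R^N)\hookrightarrow W^{1,p}(\R^N)$ is continuous, and composing with the classical Sobolev embedding theorem on $W^{1,p}(\R^N)$ in each of the three regimes $p<N$, $p=N$, $p>N$ yields the continuous embedding into $L^r(\R^N)$ for the claimed ranges of $r$. Only the compact statement \eqref{comp} requires real work, since $\R^N$ is unbounded and on its own the continuous embedding into $L^r(\R^N)$ is not compact; here is where hypothesis $(V_2)$ enters.

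For the compact embedding, following \cite[Theorem 3.1]{BF}, the plan is a standard splitting between a ball and its complement. Let $(u_n)_n$ be a bounded sequence in $W_V^{1,p}(\R^N)$. By \eqref{WVem} and reflexivity, up to a subsequence $u_n\wk u$ in $W^{1,p}(\R^N)$, and the classical Rellich--Kondrachov theorem applied on every ball $B_R$ gives $u_n\to u$ strongly in $L^r(B_R)$ for any $r<p^*$ and any $R>0$. It then suffices to prove a uniform tail estimate: for every $\eps>0$ there exists $R_0>0$ such that
\[
\int_{\R^N\setminus B_R} |u_n|^r\, dx \ < \ \eps\qquad \hbox{for all $n\in\N$ and all $R\ge R_0$.}
\]
From this and the pointwise a.e.\ convergence on $\R^N$, a standard diagonal/Brezis--Lieb type argument gives $u_n\to u$ in $L^r(\R^N)$.

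The main obstacle is the tail estimate, which is exactly where $(V_2)$ is used. The idea is to cover $\R^N\setminus B_R$ by balls $B_1(x_i)$ with uniformly finite overlap (e.g.\ centers on a lattice), and on each such ball to bound the local $L^r$ norm in the following way: by the local Sobolev embedding on $B_1(x_i)$ one has $|u|_{B_1(x_i),q}\le C(|\nabla u|_{B_1(x_i),p}+|u|_{B_1(x_i),p})$ for a suitable $q$ with $r<q\le p^*$, and then H\"older's inequality with the conjugate weights $V$ and $1/V$ controls the $L^p$ piece by a factor of the form $\bigl(\int_{B_1(x_i)} V|u|^p\bigr)^{1/p}\bigl(\int_{B_1(x_i)} V^{-\alpha}\bigr)^{1/\alpha'}$ for an appropriate exponent. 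Summing over $i$ and using the uniform bound on $\|u_n\|_{V}$ together with $(V_2)$, which forces $\int_{B_1(x)}V^{-1}dy\to 0$ as $|x|\to\infty$, gives the required tail estimate uniformly in $n$. The delicate point is choosing the H\"older exponents so that the interpolation exponent between $L^p$ and $L^{p^*}$ matches $r<p^*$ and produces a genuinely vanishing factor depending only on the quantity in $(V_2)$; this is precisely why $r$ must stay strictly below $p^*$.
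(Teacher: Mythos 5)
Your proposal is correct and follows essentially the same route as the paper: the continuous embeddings are obtained by composing \eqref{WVem} with the classical Sobolev Embedding Theorems, and the compact embedding \eqref{comp} rests on the argument of \cite[Theorem 3.1]{BF}, which the paper simply cites while you sketch it (Rellich--Kondrachov on balls plus a uniform tail estimate from $(V_2)$ via a finite-overlap covering by unit balls, H\"older with the weights $V$ and $V^{-1}$, and interpolation with the $L^{p^*}$ bound). Nothing in your outline deviates from, or goes beyond, what the paper's own treatment requires.
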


From Theorem \ref{embed} it follows that, 
for any $r \ge p$ so that \eqref{cont1}, respectively \eqref{cont2}
or \eqref{cont3}, holds then 
a constant $\tau_r>0$ exists such that
\begin{equation}   \label{sob}
|u|_r\le\tau_r \|u\|_{V} \quad\mbox{ for all } u\in W_V^{1, p}(\R^N).
\end{equation}

On the other hand, taking $\Omega$ open bounded domain in $\R^N$ 
and $p<N$, a classical embedding theorem implies that
a constant $\sigma_* > 0$, independent of $\Omega$ and 
depending only on $p$ and $N$, exists such that
\begin{equation}\label{stima*}
|u|_{\Omega,p^*}\ \le\ \sigma_* \|u\|_{\Omega} \quad \hbox{for all $u \in W_0^{1,p}(\Omega)$.}
\end{equation}

From now on, we assume that potential $V(x)$ is a measurable function 
which satisfies condition $(V_1)$ and we set
\begin{equation}  \label{Xdefn}
X:= W_V^{1, p}(\R^N)\cap L^{\infty}(\R^N)\quad \hbox{and}\quad 
\|u\|_X =\|u\|_V +|u|_{\infty}\quad\mbox{ for any } u\in X.
\end{equation}

In the following, we assume $p\le N$ (otherwise, embedding \eqref{cont3} implies that $X=W_V^{1, p}(\R^N)$ 
and all the arguments and proofs can be simplified).

\begin{lemma}   \label{lemmaX}
Taking $p \le r < +\infty$, we have that
\begin{equation} \label{Lwin}
X \hookrightarrow L_V^{r}(\R^N)  \qquad \hbox{with}\quad
|u|_{V,r}\le \|u\|_X \quad\mbox{ for all } u\in X.
\end{equation}
\end{lemma}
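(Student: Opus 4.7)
The plan is to interpolate between $L^\infty(\R^N)$ and the base weighted space $L_V^p(\R^N)$, exploiting that every $u\in X$ is essentially bounded. The case $r=p$ is immediate: from the definition \eqref{weightnorm} one has $|u|_{V,p}\le \|u\|_V\le \|u\|_X$. So I would concentrate on $r>p$.

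For $r>p$, the starting point is the pointwise bound $|u(x)|^{r}=|u(x)|^{r-p}|u(x)|^{p}\le |u|_{\infty}^{r-p}|u(x)|^{p}$ valid for a.e.\ $x\in\R^N$ whenever $u\in L^{\infty}(\R^N)$. Multiplying by the positive weight $V(x)$ (condition $(V_1)$) and integrating gives
\[
\int_{\R^{N}} V(x)|u|^{r}\,dx \;\le\; |u|_{\infty}^{r-p}\int_{\R^{N}} V(x)|u|^{p}\,dx,
\]
which shows that $u\in L_V^{r}(\R^N)$ and
\[
|u|_{V,r}\;\le\;|u|_{\infty}^{(r-p)/r}\,|u|_{V,p}^{p/r}.
\]

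Next, I would apply Young's inequality with the conjugate exponents $r/(r-p)$ and $r/p$ (forced by the fractional powers just obtained) to the right--hand side, so that
\[
|u|_{\infty}^{(r-p)/r}\,|u|_{V,p}^{p/r}\;\le\;\tfrac{r-p}{r}\,|u|_{\infty}+\tfrac{p}{r}\,|u|_{V,p}\;\le\;|u|_{\infty}+|u|_{V,p}.
\]
Then bounding $|u|_{V,p}\le \|u\|_V$ as above and recalling \eqref{Xdefn}, one obtains $|u|_{V,r}\le |u|_{\infty}+\|u\|_V=\|u\|_X$, which yields both the inclusion $X\subset L_V^{r}(\R^N)$ and the continuity of the embedding with embedding constant $1$.

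There is no real obstacle in the argument; it is a short interpolation estimate. The only point requiring a touch of attention is selecting the Young splitting that collapses into the intersection norm $\|\cdot\|_X$ rather than into some weighted combination: the exponents $r/(r-p)$ and $r/p$ are uniquely dictated by the powers produced in the pointwise estimate, and using the trivial bound $|u|_{V,p}\le\|u\|_V$ (immediate from \eqref{weightnorm}) is what allows the final constant to be exactly $1$, matching the statement.
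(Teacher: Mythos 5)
Your proof is correct and rests on the same key estimate as the paper, namely the pointwise bound $|u|^r\le|u|_{\infty}^{r-p}|u|^p$ integrated against the weight $V$. The only cosmetic difference is in the final step: the paper bounds $|u|_{\infty}^{r-p}\|u\|_V^{p}\le\|u\|_X^{r-p}\|u\|_X^{p}=\|u\|_X^{r}$ directly, whereas you pass through Young's inequality, which yields the same embedding constant $1$.
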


\begin{proof}
Taking any $u\in X$, definitions \eqref{weightnorm} and \eqref{Xdefn} imply that
\[
\int_{\R^N} V(x) |u|^r dx \le |u|_{\infty}^{r-p}\int_{\R^N} V(x) |u|^p dx
\le |u|_{\infty}^{r-p} \|u\|_V^ p\le \|u\|_X^r
\]
which gives the proof.
\end{proof}

\begin{remark}  \label{remX}
From \eqref{LVem}, definition \eqref{Xdefn} and Lemma \ref{lemmaX} it follows that
\[
X\hookrightarrow L^r(\R^N) \quad 
\mbox{ for any $p \le r \le +\infty$.}
\]
\end{remark}

\begin{lemma}\label{immergo2}
If $(u_n)_n \subset X$, $u \in X$ and $M > 0$ are such that
\begin{equation}    \label{conW}
\|u_n -u\|_{V}\to 0 \quad\mbox{ as } n\to+\infty
\end{equation}
and
\begin{equation} \label{<M}
|u_n|_{\infty}\le M \quad\mbox{ for all } n\in\N,
\end{equation}
then $u_n \to u$ strongly in $L_V^r(\R^N)$ for any $p \le r < +\infty$. 
\end{lemma}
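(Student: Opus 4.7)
The plan is to reduce the general case $r \ge p$ to the case $r = p$ (which follows directly from the hypothesis) by a pointwise interpolation argument, using the uniform $L^\infty$--bound \eqref{<M} to control the excess power $r-p$. The case $r=p$ is immediate: by definition \eqref{weightnorm} one has
\[
|u_n - u|_{V,p} \;\le\; \|u_n - u\|_V \;\to\; 0.
\]

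The crux of the argument is to upgrade the uniform bound \eqref{<M} on $(u_n)_n$ to the same bound on the limit $u$. To this end I would argue as follows. By \eqref{LVem}, the continuous embedding $L_V^p(\R^N)\hookrightarrow L^p(\R^N)$ together with \eqref{conW} yields $|u_n-u|_p \to 0$; hence a subsequence $(u_{n_k})_k$ converges to $u$ almost everywhere in $\R^N$. Passing to the pointwise limit in \eqref{<M} along this subsequence gives $|u(x)| \le M$ for a.e.\ $x \in \R^N$, i.e.\ $|u|_\infty \le M$.

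With this in hand, for any fixed $r$ with $p \le r < +\infty$ and for every $n \in \N$ the trivial estimate
\[
|u_n(x) - u(x)|^r \;\le\; (2M)^{r-p}\,|u_n(x)-u(x)|^p \qquad\mbox{ for a.e. } x \in \R^N
\]
holds. Multiplying by $V(x) \ge 0$ and integrating over $\R^N$, one obtains
\[
\int_{\R^N} V(x)\,|u_n - u|^r\, dx \;\le\; (2M)^{r-p}\int_{\R^N} V(x)\,|u_n - u|^p\, dx
\;=\; (2M)^{r-p}\,|u_n - u|_{V,p}^{\,p},
\]
and the right-hand side tends to $0$ by \eqref{conW}. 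Since the inequality holds for the whole sequence (the bound $|u|_\infty\le M$ being independent of $n$), this proves $u_n \to u$ strongly in $L_V^r(\R^N)$.

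The only mildly non-routine step is the transfer of the $L^\infty$--bound from $(u_n)_n$ to $u$; everything else is bookkeeping. Note that I do not need any compactness from assumption $(V_2)$ here, only the continuous embedding \eqref{LVem} ensured by $(V_1)$.
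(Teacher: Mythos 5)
Your proof is correct and follows essentially the same route as the paper: both rest on the pointwise interpolation $V(x)|u_n-u|^r \le |u_n-u|_\infty^{\,r-p}\, V(x)|u_n-u|^p$ together with a uniform $L^\infty$ bound on $u_n-u$ and the convergence \eqref{conW}. The only difference is that your intermediate step establishing $|u|_\infty \le M$ via an a.e.\ convergent subsequence, while valid, is unnecessary: since $u \in X$ by hypothesis one already has $|u|_\infty < +\infty$, and the paper simply bounds $|u_n-u|_\infty \le M + |u|_\infty$.
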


\begin{proof}
From \eqref{<M} we have that
\[
\int_{\R^N} V(x) |u_n-u|^r dx \le|u_n-u|_{\infty}^{r-p}\int_{\R^N}V(x) |u_n -u|^p dx \le (M+|u|_{\infty})^{r-p}\|u_n -u\|_{V}^p,
\]
which, together with $(V_1)$, \eqref{LVnorm} and \eqref{conW}, implies the desired result.
\end{proof}

We proceed recalling the following definition.

\begin{definition}
A function $h:\R^N\times\R\to\R$ is a $\mathcal{C}^{k}$--Carath\'eodory function, 
$k\in\N\cup\lbrace 0\rbrace$, if
\begin{itemize}
\item $h(\cdot,t) : x \in \R^N \mapsto h(x,t) \in \R$ is measurable for all $t \in \R$,
\item $h(x,\cdot) : t \in \R \mapsto h(x,t) \in \R$ is $\mathcal{C}^k$ for a.e. $x \in \R^N$.
\end{itemize}
\end{definition}

Let $A:(x,t) \in \R^N\times\R \mapsto A(x,t) \in \R$ be a given function such that the following conditions hold:
\begin{itemize}
\item[$(h_0)$]
$A(x,t)$ is a $\mathcal{C}^1$--Carath\'eodory function with $A_t(x,t)=\frac{\partial}{\partial t}A(x, t)$;
\item[$(h_1)$] for any $\rho > 0$ we have that
\[
\sup_{\vert t\vert\leq \rho} \vert A(\cdot,t)\vert \in L^{\infty}(\R^N), \ 
\quad \sup_{\vert t\vert\leq \rho} \vert A_t(\cdot,t)\vert \in L^{\infty}(\R^N).
\]
\end{itemize}

Furthermore, we assume that $g:\R^N\times\R\to\R$ exists such that:
\begin{itemize}
\item[$(g_0)$] $g(x,t)$ is a $\mathcal{C}^0$--Carath\'eodory function;
\item[$(g_1)$] $a_1, a_2 >0$ and $q\ge p$ exist such that
\[
|g(x, t)|\le a_1 |t|^{p-1} + a_2 |t|^{q-1}\quad \mbox{ a.e. in } \R^N, \mbox{ for all } t\in\R.
\] 
\end{itemize}

\begin{remark}   
Assumptions $(g_0)$ and $(g_1)$ imply that 
\begin{equation}  \label{Gdefn}
G:(x, t)\in\R^N\times\R\mapsto\int_0^t g(x, s) ds\in\R
\end{equation}
is a well defined $\mathcal{C}^1$--Carath\'eodory function and
\begin{equation}\label{Gle}
|G(x, t)|\le \frac{a_1}{p}|t|^p +\frac{a_2}{q}|t|^q \quad \mbox{ a.e. in } \R^N, \mbox{ for all } t\in\R.
\end{equation}
In particular, $(g_1)$ and \eqref{Gdefn} imply that
\begin{equation} \label{G0=0}
G(x, 0) =g(x, 0)=0\quad \mbox{ for a.e. } x\in\R^N.
\end{equation}
\end{remark}

Taking any $u\in X$, from assumption $(h_1)$ and definition \eqref{Xdefn} 
it follows that $A(\cdot, u)|\nabla u(\cdot)|^p\in L^1(\R^N)$, 
while hypotheses $(g_0)$, $(g_1)$ and Remark \ref{remX} provide 
that $G(\cdot, u)\in L^1(\R^N)$. 
Then, assumption $(V_1)$ implies that functional
\begin{equation}  \label{funct}
\J(u) = \frac1p \int_{\R^N} A(x, u)|\nabla u|^p dx +\frac1p \int_{\R^N} V(x) |u|^p dx -\int_{\R^N} G(x, u) dx
\end{equation}
is well defined for all $u\in X$. 
Moreover, taking any $u$, $v\in X$, the same assumptions imply
that $A_t(\cdot,u)v |\nabla u(\cdot)|^p\in L^1(\R^N)$
and $g(\cdot, u) v\in L^1(\R^N)$, then the G\^ateaux differential 
of functional $\J$ in $u$ along the direction $v$ is well defined and is given by
\begin{equation}   \label{diff}
\begin{split}
\langle d\J(u), v\rangle &=\int_{\R^N} A(x, u)|\nabla u|^{p-2} \nabla u\cdot\nabla v dx 
+\frac1p \int_{\R^N} A_t(x, u) v|\nabla u|^p dx\\
&\quad +\int_{\R^N} V(x) |u|^{p-2} u v dx -\int_{\R^N} g(x, u) v dx.
\end{split}
\end{equation}

As useful in the following, we recall same classical inequalities (for the proof, see, e.g., \cite{GM}).

\begin{lemma}      \label{lemmavett}
A constant $C_0 > 0$ exists such that for any $\xi, \eta\in\R^N, N\geq 1$, it results
\begin{align}   \label{>2}
|| \xi|^{r-2} \xi -| \eta|^{r-2} \eta| &\leq
C_0 |\xi -\eta|\left(| \xi\vert +\vert \eta|\right)^{r-2} &&\hbox{ if } r > 2,\\  
\label{=2}
|| \xi|^{r-2} \xi -|\eta|^{r-2} \eta|
&\leq C_0 | \xi -\eta|^{r-1} &&\hbox{ if } 1< r\leq 2.
\end{align}
\end{lemma}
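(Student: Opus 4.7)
My plan is to derive both estimates from a single line-integral representation of the vector field $\Phi(\zeta) = |\zeta|^{r-2}\zeta$. Writing $\sigma(t) = \eta + t(\xi-\eta)$, I note that $\Phi$ is of class $\mathcal{C}^1$ on $\R^N \setminus \{0\}$ with Jacobian $D\Phi(\zeta) = |\zeta|^{r-2} I + (r-2)|\zeta|^{r-4}\, \zeta \otimes \zeta$, so its operator norm is at most $c_r |\zeta|^{r-2}$ with $c_r = \max\{1, r-1\}$. Whenever the segment $\sigma([0,1])$ avoids the origin, the fundamental theorem of calculus therefore yields the master bound
\[
\bigl||\xi|^{r-2}\xi - |\eta|^{r-2}\eta\bigr| \ \le\ c_r\, |\xi-\eta| \int_0^1 |\sigma(t)|^{r-2}\, dt.
\]

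For \eqref{>2} with $r>2$, the exponent $r-2$ is positive, so I would simply bound $|\sigma(t)| \le |\xi|+|\eta|$ and pull $(|\xi|+|\eta|)^{r-2}$ out of the integral, giving the claim with $C_0 = r-1$; potential crossings of the origin by $\sigma$ are harmless since the integrand is bounded and extends continuously by $0$ at such points. For \eqref{=2} with $1 < r \le 2$ the exponent is nonpositive, the integrand may blow up, and enlarging $|\sigma(t)|$ is counterproductive, so I would split according to the size of $|\xi-\eta|$ relative to $M := \max\{|\xi|,|\eta|\}$. In the regime $|\xi-\eta| \ge M/4$, the plain triangle inequality together with $|\xi|,|\eta| \le 4|\xi-\eta|$ and $r-1>0$ already delivers
\[
\bigl||\xi|^{r-2}\xi - |\eta|^{r-2}\eta\bigr| \ \le\ |\xi|^{r-1} + |\eta|^{r-1} \ \le\ 2\cdot 4^{r-1}\, |\xi-\eta|^{r-1}.
\]
In the complementary regime $|\xi-\eta| < M/4$, every point of the segment satisfies $|\sigma(t)| \ge M - |\xi-\eta| > 3|\xi-\eta| > 0$, so the master bound applies, and since $r-2 \le 0$ the integrand is dominated by $(3|\xi-\eta|)^{r-2}$; this yields \eqref{=2} with a constant depending only on $r$.

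The only genuine obstacle is the subquadratic case, in which $\Phi$ fails to be smooth at the origin and $|\sigma(t)|^{r-2}$ can be singular along the segment. The dichotomy above is tailored precisely to handle this: the large-$|\xi-\eta|$ branch sidesteps the line-integral representation entirely through a direct triangle inequality, while the small-$|\xi-\eta|$ branch forces the segment to stay uniformly away from the origin so that the master bound is legitimate. Taking $C_0$ to be the maximum of the finitely many constants produced in the two cases gives a single value valid for every admissible $r$.
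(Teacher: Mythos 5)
Your argument is correct. Note that the paper itself gives no proof of this lemma: it simply refers to the literature (\cite{GM}), where the inequalities are obtained by essentially the same classical device you use, namely controlling $|\zeta|^{r-2}\zeta$ through its Jacobian along the segment joining $\eta$ to $\xi$. Your master bound is sound (the operator norm computation giving $\max\{1,r-1\}|\zeta|^{r-2}$ is right, and for $r>2$ the map is in fact $\mathcal{C}^1$ up to the origin, so crossings of $0$ cause no trouble), and your dichotomy in the subquadratic case is exactly what is needed: the regime $|\xi-\eta|\ge M/4$ is handled by the triangle inequality with $|\xi|,|\eta|\le 4|\xi-\eta|$, while in the regime $|\xi-\eta|<M/4$ the segment stays at distance at least $M-|\xi-\eta|>3|\xi-\eta|>0$ from the origin, so the integral representation applies and the nonpositive exponent works in your favour. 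The only phrasing to adjust is the closing sentence: the constant you produce depends on $r$ (through $r-1$ and $2\cdot 4^{r-1}$), so one should not claim a single $C_0$ valid simultaneously for every admissible $r$; this is harmless, since in the paper the lemma is applied with the fixed exponent $r=p$, and the statement is naturally read with $C_0=C_0(r)$.
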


By reasoning as in \cite[Lemma 3.4]{CS2020},
from Lemma \ref{lemmavett} we deduce the following result.

\begin{lemma}\label{lemmaOm}
Taking $p > 1$, a constant $C_1 = C_1(p) > 0$ exists such that for any open domain $\Omega\subset\R^N$ it results
\[
\int_{\Omega}||\nabla w|^p-|\nabla z|^p|dx\le C_1 \|w-z\|_{\Omega}\left(\|w\|_{\Omega}^{p-1} +\|z\|_{\Omega}^{p-1}\right) 
\quad\forall w, z\in W_0^{1, p}(\Omega).
\]
\end{lemma}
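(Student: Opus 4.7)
The plan is to reduce the claim to Lemma \ref{lemmavett} via the algebraic telescoping
\[
|\nabla w|^p-|\nabla z|^p \;=\; F(\nabla w)\cdot(\nabla w-\nabla z)+\bigl(F(\nabla w)-F(\nabla z)\bigr)\cdot\nabla z,
\]
where $F(\xi):=|\xi|^{p-2}\xi$, followed by H\"older's inequality in $L^p(\Omega)\times L^{p/(p-1)}(\Omega)$. The integrand is then controlled pointwise by $|\nabla w|^{p-1}|\nabla(w-z)|+|F(\nabla w)-F(\nabla z)|\,|\nabla z|$, and I would distinguish the two regimes of $p$ already visible in Lemma \ref{lemmavett}.

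If $p>2$, estimate \eqref{>2} applied with $r=p$ gives $|F(\nabla w)-F(\nabla z)|\leq C_0|\nabla(w-z)|(|\nabla w|+|\nabla z|)^{p-2}$, so the pointwise bound becomes $c(p)\,|\nabla(w-z)|\,(|\nabla w|+|\nabla z|)^{p-1}$. Integrating, applying H\"older with conjugate exponents $p$ and $p/(p-1)$, and using the elementary inequality $(a+b)^{p-1}\leq 2^{p-1}(a^{p-1}+b^{p-1})$ together with $|\nabla u|_{\Omega,p}\leq\|u\|_{\Omega}$, the claim follows.

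If $1<p\leq 2$ (which includes $p=2$), estimate \eqref{=2} yields $|F(\nabla w)-F(\nabla z)|\leq C_0|\nabla(w-z)|^{p-1}$, so the integrand is bounded by $|\nabla w|^{p-1}|\nabla(w-z)|+C_0|\nabla z|\,|\nabla(w-z)|^{p-1}$. I would exploit the subadditivity $(a+b)^{p-1}\leq a^{p-1}+b^{p-1}$, valid because $0<p-1\leq 1$, to rewrite $|\nabla w|^{p-1}\leq |\nabla(w-z)|^{p-1}+|\nabla z|^{p-1}$. H\"older then gives a bound of the shape $|\nabla(w-z)|_{\Omega,p}^{p}+c(p)|\nabla(w-z)|_{\Omega,p}\,|\nabla z|_{\Omega,p}^{p-1}$. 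Factoring $|\nabla(w-z)|_{\Omega,p}^{p}=|\nabla(w-z)|_{\Omega,p}\cdot|\nabla(w-z)|_{\Omega,p}^{p-1}$ and applying the same subadditivity to the triangle inequality $|\nabla(w-z)|_{\Omega,p}\leq|\nabla w|_{\Omega,p}+|\nabla z|_{\Omega,p}$ yields $|\nabla(w-z)|_{\Omega,p}^{p-1}\leq |\nabla w|_{\Omega,p}^{p-1}+|\nabla z|_{\Omega,p}^{p-1}$, putting everything in the target form.

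The only real subtlety is the regime $1<p\leq 2$, where the $|\nabla(w-z)|^p$ contribution must be massaged back into a product of the prescribed shape using the subadditivity of $t\mapsto t^{p-1}$; the rest is a routine chain of H\"older and triangle estimates, and the final constant $C_1$ absorbs $C_0$ from Lemma \ref{lemmavett} together with explicit powers of $2$.
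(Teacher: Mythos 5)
Your telescoping identity and your treatment of the case $p>2$ are correct, but the case $1<p<2$ has a genuine gap. After replacing $|\nabla w|^{p-1}$ by $|\nabla(w-z)|^{p-1}+|\nabla z|^{p-1}$, the integrand is controlled by $|\nabla(w-z)|^{p}+|\nabla z|^{p-1}|\nabla(w-z)|+C_0\,|\nabla z|\,|\nabla(w-z)|^{p-1}$, and H\"older applied to the \emph{last} term gives $C_0\,|\nabla z|_{\Omega,p}\,|\nabla(w-z)|_{\Omega,p}^{\,p-1}$, not the term $c(p)\,|\nabla(w-z)|_{\Omega,p}\,|\nabla z|_{\Omega,p}^{\,p-1}$ that appears in your claimed "bound of the shape": the exponents are swapped. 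This is not cosmetic. For $1<p<2$ the quantity $\|w-z\|_{\Omega}^{\,p-1}\|z\|_{\Omega}$ cannot be absorbed into $C_1\|w-z\|_{\Omega}\big(\|w\|_{\Omega}^{p-1}+\|z\|_{\Omega}^{p-1}\big)$: fix $z\neq 0$ and take $w=z+\eps\varphi$; your chain of estimates then produces a bound of order $\eps^{p-1}$, while the right-hand side of the lemma is of order $\eps$, and $\eps^{p-1}/\eps\to+\infty$ as $\eps\to 0^{+}$. The root of the problem is that \eqref{=2} discards all information about $|\nabla z|$, so it is too lossy for the mixed term $\big(F(\nabla w)-F(\nabla z)\big)\cdot\nabla z$ when $|\nabla(w-z)|$ is small compared with $|\nabla z|$ (only the endpoint $p=2$, where $p-1=1$, survives).

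The repair is to prove the pointwise estimate $\big||\nabla w|^{p}-|\nabla z|^{p}\big|\le c(p)\,|\nabla(w-z)|\,(|\nabla w|+|\nabla z|)^{p-1}$ for \emph{every} $p>1$, with no case splitting, and this is essentially how the paper deduces the lemma from Lemma \ref{lemmavett} (following \cite{CS2020}). You can obtain it from Lemma \ref{lemmavett} itself: apply \eqref{>2} with $r=p+1>2$ to the nonnegative scalars $\xi=|\nabla w(x)|$, $\eta=|\nabla z(x)|$ (the lemma allows $N=1$), which gives $\big||\nabla w|^{p}-|\nabla z|^{p}\big|\le C_0\,\big||\nabla w|-|\nabla z|\big|\,(|\nabla w|+|\nabla z|)^{p-1}$, and then use $\big||\nabla w|-|\nabla z|\big|\le|\nabla(w-z)|$; alternatively, the mean value theorem for $t\mapsto t^{p}$ gives the same bound with constant $p$. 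A single application of H\"older with exponents $p$ and $\frac{p}{p-1}$, together with $(a+b)^{p-1}\le 2^{p-1}(a^{p-1}+b^{p-1})$ and $|\nabla u|_{\Omega,p}\le\|u\|_{\Omega}$, then yields the lemma for all $p>1$ at once, making your separate $p>2$ computation unnecessary as well.
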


Now, we are ready to state a ``good'' variational principle.

\begin{proposition} \label{C1}
Let $p>1$ and suppose that hypotheses $(V_1)$, $(h_0)$--$(h_1)$ and $(g_0)$--$(g_1)$ hold. 
If $(u_n)_n\subset X$ and $u\in X$ are such that
\begin{equation}   \label{reg3}
u_n\to u \quad\mbox{ a.e. in } \R^N
\end{equation}
and \eqref{conW}, \eqref{<M} hold for a constant $M>0$, then
\[
\J(u_n)\to \J(u) \quad\mbox{ and }\quad \|d\J(u_n) -d\J(u)\|_{X^{\prime}}\to 0\quad\mbox{ as } n\to +\infty.
\]
Hence, $\J$ is a $\mathcal{C}^1$ functional in $X$ with Fr\'echet differential  
defined as in \eqref{diff}.
\end{proposition}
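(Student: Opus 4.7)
The strategy is to exploit the uniform $L^{\infty}$--bound \eqref{<M} to upgrade the $W^{1,p}_V$--convergence \eqref{conW} to every weighted Lebesgue space. By Lemma \ref{immergo2}, we get $u_n\to u$ in $L^r_V(\R^N)$ for every $r\ge p$, hence also $u_n\to u$ in $L^r(\R^N)$ for $p\le r<+\infty$ thanks to \eqref{LVem}, while \eqref{conW} directly yields $\nabla u_n\to\nabla u$ in $L^p(\R^N)$. Setting $K=M+|u|_\infty$, hypothesis $(h_1)$ will furnish constants $L_0, L_1>0$ with $|A(x,t)|\le L_0$ and $|A_t(x,t)|\le L_1$ whenever $|t|\le K$, and \eqref{reg3} together with continuity in $t$ gives $A(x,u_n)\to A(x,u)$ and $A_t(x,u_n)\to A_t(x,u)$ a.e.\ in $\R^N$.

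For the value convergence $\J(u_n)\to\J(u)$ we treat the three summands in \eqref{funct} separately. The potential part is immediate from Lemma \ref{immergo2}. For the nonlinear part, \eqref{Gle} with bounded $u_n,u$ gives $|G(x,u_n)|\le C|u_n|^p$, and since $|u_n|^p\to|u|^p$ in $L^1(\R^N)$ Vitali's theorem yields the convergence. For the quasilinear part we use the decomposition
\[
A(x,u_n)|\nabla u_n|^p - A(x,u)|\nabla u|^p = \bigl(A(x,u_n)-A(x,u)\bigr)|\nabla u_n|^p + A(x,u)\bigl(|\nabla u_n|^p-|\nabla u|^p\bigr),
\]
and apply Vitali to the first piece (a.e.\ convergence of $A(\cdot,u_n)$ with equi-integrable dominant $|\nabla u_n|^p$) and Lemma \ref{lemmaOm} to the second.

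For $\|d\J(u_n)-d\J(u)\|_{X'}\to 0$, fix $v\in X$ with $\|v\|_X\le 1$ and split $\langle d\J(u_n)-d\J(u),v\rangle$ into the four integrals coming from \eqref{diff}. In the first two integrals we further insert mixed terms, so that every residual takes one of two shapes: either a \emph{coefficient difference} ($A$, $A_t$, or $g$), bounded by Hölder and sent to zero via Vitali with an equi-integrable dominant $|\nabla u_n|^p$ or $|u_n|^p+|u|^p$; or a \emph{gradient/power difference} ($|\nabla u_n|^{p-2}\nabla u_n-|\nabla u|^{p-2}\nabla u$, $|\nabla u_n|^p-|\nabla u|^p$, or $|u_n|^{p-2}u_n-|u|^{p-2}u$), controlled by Lemma \ref{lemmavett} together with Hölder applied with exponents $p-1$ and $(p-1)/(p-2)$ when $p>2$ (and directly when $1<p\le 2$), and reducing to positive powers of $|\nabla u_n-\nabla u|_p$ or $|u_n-u|_{V,p}$, both vanishing in the limit. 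The test function is absorbed via $|\nabla v|_p\le\|v\|_X$, $|v|_\infty\le\|v\|_X$, $|v|_{V,p}\le\|v\|_X$, or $|v|_p\le\tau_p\|v\|_X$ (by \eqref{sob}), so every estimate is uniform in $\|v\|_X\le 1$.

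The most delicate piece is the potential summand $\int V(|u_n|^{p-2}u_n-|u|^{p-2}u)v dx$: since $V$ is only bounded from below, the naive $L^1$--$L^\infty$ pairing fails because $V$ need not be integrable. The right device is the weighted duality
\[
\biggl|\int V\phi v dx\biggr|\ \le\ |v|_{V,p}\,\biggl(\int V|\phi|^{p/(p-1)} dx\biggr)^{(p-1)/p},
\]
which, combined with Lemma \ref{lemmavett} and a second weighted Hölder (exponents $(p-1)/(p-2)$ and $p-1$ for $p>2$), majorizes the right-hand side by $C(|u_n|_{V,p}+|u|_{V,p})^{p(p-2)/(p-1)}|u_n-u|_{V,p}^{p/(p-1)}\to 0$; the case $1<p\le 2$ is immediate from the second inequality in Lemma \ref{lemmavett}. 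Summing the four uniform bounds then gives $\|d\J(u_n)-d\J(u)\|_{X'}\to 0$. Finally, any convergence $u_n\to u$ in $X$ satisfies \eqref{reg3}--\eqref{<M} (as $L^\infty$--convergence forces a.e.\ convergence and a uniform $|\cdot|_\infty$--bound), so $d\J:X\to X'$ is continuous on $X$ and $\J\in C^1(X,\R)$ with differential \eqref{diff}.
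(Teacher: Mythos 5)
Your proposal is correct and takes essentially the same route as the paper: the same splitting of $\J$ into the quasilinear, potential and $G$--parts, with the delicate weighted term handled exactly as in the paper's estimates \eqref{V2}--\eqref{V3} (weighted H\"older with $V=V^{1/p}V^{(p-1)/p}$, Lemma \ref{lemmavett}, and a second weighted H\"older when $p>2$), the only difference being that the paper dispatches the $A$-- and $G$--parts by invoking the proof of \cite[Proposition 3.6]{CS2020} after noting $\|u_n-u\|\to 0$, whereas you reprove them directly via $(h_1)$, Vitali-type convergence and Lemmas \ref{lemmavett}, \ref{lemmaOm}. One cosmetic remark: in the $p>2$ case the final majorant should carry the outer exponent $\frac{p-1}{p}$, i.e. it reads $C\,|u_n-u|_{V,p}\,\big(|u_n|_{V,p}+|u|_{V,p}\big)^{p-2}$ as in \eqref{V3}, though the bound you wrote tends to zero just the same.
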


\begin{proof}
For the sake of convenience, we set $\J=\J_1+\J_2 - \J_3$, where
\[
\begin{split}
&\J_1:u\in X\mapsto \J_1(u) = \frac{1}{p}\ \int_{\R^N} A(x, u) |\nabla u|^p dx \in\R,\\  
&\J_2:u\in X\mapsto \J_2(u) = \frac1p \int_{\R^N} V(x) |u|^p dx\in\R,\\  
&\J_3:u\in X\mapsto\J_3(u) = \int_{\R^N} G(x, u) dx \in\R,
\end{split}
\]
with related G\^ateaux differentials
\[
\begin{split}
&\langle d\J_1(u), v\rangle =\int_{\R^N} A(x, u)|\nabla u|^{p-2} \nabla u\cdot\nabla v\ dx 
+\frac1p \int_{\R^N} A_t(x, u) v|\nabla u|^p dx,\\
&\langle d\J_2(u), v\rangle = \int_{\R^N} V(x) |u|^{p-2} u v\ dx,\\
&\langle d\J_3(u), v\rangle = \int_{\R^N} g(x, u) v\ dx,
\end{split}
\]
for any $u$, $v \in X$.\\
Now, let $(u_n)_n\subset X$, $u\in X$ and $M>0$ be such that \eqref{conW}, \eqref{<M} and \eqref{reg3} hold. \\
Firstly, we note that \eqref{WVem} and \eqref{conW} imply
$\|u_n - u\| \to 0$, then from the proof of \cite[Proposition 3.6]{CS2020}
it follows that
\[
\begin{split}
&\J_1(u_n)\to\J_1(u)\quad\mbox{ and }\quad \|d\J_1(u_n)-d\J_1(u)\|_{X'}\to 0,\\
&\J_3(u_n)\to\J_3(u)\quad\mbox{ and }\quad \|d\J_3(u_n)-d\J_3(u)\|_{X'}\to 0.
\end{split}
\]
Moreover, from definitions \eqref{LVnorm} and \eqref{weightnorm}, 
limit \eqref{conW} implies also that
\[
|\J_2(u_n) - \J_2(u)| = \frac1p ||u_n|_{V,p}^p -|u|_{V,p}^p| \to 0
\quad \hbox{as $n\to +\infty$}.
\]
At last, taking $v\in X$ such that $\|v\|_X=1$, so 
\begin{equation}   \label{<1}
|v|_{\infty}\le 1, \quad |v|_{V,p} \le 1,
\end{equation}
from $(V_1)$ and by definition 
we have that
\begin{equation}  \label{div1}
|\langle d\J_2(u_n) - d\J_2(u), v\rangle |\ \le\ 
\int_{\R^N} V(x) ||u_n|^{p-2} u_n -|u|^{p-2} u| |v| dx.
\end{equation}
Thus, if $1 < p \le 2$, from \eqref{=2}, 
Hölder inequality with $V(x) =V^{\frac1p}(x) V^{\frac{p-1}{p}}(x)$
and \eqref{<1} we have that
\begin{equation} \label{V2}
\begin{split}
&\int_{\R^N} V(x) ||u_n|^{p-2} u_n -|u|^{p-2} u| |v| dx \le 
C_0 \int_{\R^N} V(x) |u_n -u|^{p-1} |v| dx\\
&\qquad\le C_0 |u_n -u|_{V,p}^{p-1} |v|_{V,p} \le C_0 |u_n -u|_{V,p}^{p-1}.
\end{split}
\end{equation}
On the other hand, if $p>2$, again from Hölder inequality 
with $V(x) =V^{\frac1p}(x) V^{\frac{p-1}{p}}(x)$,
and \eqref{>2}, \eqref{<1} and direct computations we have that
\begin{equation}   \label{V1}
\begin{split}
&\int_{\R^N} V(x) ||u_n|^{p-2} u_n -|u|^{p-2} u| |v| dx \le 
\left(\int_{\R^N} V(x) ||u_n|^{p-2} u_n -|u|^{p-2} u|^{\frac{p}{p-1}} dx\right)^{\frac{p-1}{p}} |v|_{V,p}\\
&\qquad 
\le C_0 \left(\int_{\R^N} V(x) |u_n -u|^{\frac{p}{p-1}}(|u_n|+|u|)^{\frac{p(p-2)}{p-1}} dx\right)^{\frac{p-1}{p}},
\end{split}
\end{equation}
where once again from Hölder inequality but
with $V(x) =V^{\frac1{p-1}}(x) V^{\frac{p-2}{p-1}}(x)$ it results
\[
\left(\int_{\R^N} V(x) |u_n -u|^{\frac{p}{p-1}}(|u_n|+|u|)^{\frac{p(p-2)}{p-1}} dx\right)^{\frac{p-1}{p}}
\le |u_n -u|_{V,p} \left(\int_{\R^N} V(x) (|u_n|+|u|)^{p} dx\right)^{\frac{p-2}{p}}.
\]
Hence, from \eqref{V1}, direct computations imply that
\begin{equation}  \label{V3}
\int_{\R^N} V(x) ||u_n|^{p-2} u_n -|u|^{p-2} u| |v| dx \le 
C^*_0 |u_n -u|_{V,p} \big(|u_n|_{V,p}^{p-2} + |u|_{V,p}^{p-2}\big)
\end{equation}
for a suitable constant $C^*_0 >0$.\\
Thus, summing up, from \eqref{div1} and \eqref{V2}, respectively \eqref{V3}, it follows that
\[
|\langle d\J_2(u_n) -d\J_2(u), v\rangle|\to 0\quad\mbox{ uniformly with respect to } v\in X,\; \|v\|_X=1,
\]
and the desired result holds.
\end{proof}


\section{The main theorem and some first properties} \label{mainsection}

From now on, besides hypotheses $(V_1)$, $(h_0)$--$(h_1)$, $(g_0)$--$(g_1)$, which imply that
$\J$ as in \eqref{funct} is a $\mathcal{C}^1$ functional on $X$ as in \eqref{Xdefn} 
(see Proposition \ref{C1}), we consider also 
assumption $(V_2)$ and the following conditions on functions $A(x,t)$, $g(x,t)$ and $V(x)$:
\begin{itemize}
\item[$(h_2)$] a constant $\alpha_0>0$ exists such that
\[
A(x, t)\ge\alpha_0 \quad\mbox{ a.e. in } \R^N, \mbox{ for all } t\in\R;
\]
\item[$(h_3)$] some constants $\mu>p$ and $\alpha_1>0$ exist so that
\[
(\mu-p)A(x, t)-A_t(x, t)t\ge\alpha_1 A(x, t) \quad\mbox{ a.e. in } \R^N, \mbox{ for all } t\in\R;
\]
\item[$(h_4)$] a constant $\alpha_2>0$ exists such that
\[
pA(x, t) + A_t(x, t)t\ge\alpha_2 A(x, t) \quad\mbox{ a.e. in } \R^N \mbox{ for all } t \in \R;
\]
\item[$(g_2)$] we have that
\[
\lim_{t\to 0} \frac{g(x, t)}{|t|^{p-2} t} =\bar{\alpha}\
<\ \frac{\alpha_0}{\tau_p^p} \quad\mbox{ uniformly a.e. in } \R^N,
\]
where $\alpha_0$ is as in assumption $(h_2)$ and $\tau_p$ is as in \eqref{sob} with $r=p$;
\item[$(g_3)$] having $\mu$ as in hypothesis $(h_3)$, then
\[
0<\mu G(x, t)\le g(x, t)t \quad\mbox{ a.e. in } \R^N, \mbox{ for all } t\in\R\setminus\{0\};
\]
\item[$(V_3)$] for any $\varrho >0$, a constant $C_{\varrho}>0$ exists such that
\[
\esssup_{|x|\le\varrho} V(x)\le C_{\varrho}.
\]
\end{itemize}

\begin{remark} \label{RemV1}
If $(h_2)$ holds, then, without loss of generality, we can assume 
$\alpha_0 \le 1$. Moreover, taking $t=0$ in $(h_3)$, we have
also $\mu -p\ge\alpha_1$.
\end{remark}

\begin{remark}
By means of \eqref{Gdefn}, assumptions $(g_0)$ and $(g_2)$
imply that the existence of
\[
\lim_{t\to 0}\frac{G(x, t)}{|t|^p} =\frac{\bar{\alpha}}{p} \quad\mbox{ uniformly a.e. in } \R^N
\]
is provided, too. In particular, from hypotheses $(g_1)$--$(g_2)$ and direct computations 
it follows that for any $\varepsilon >0$ a constant $c_{\varepsilon} > 0$ exists so that not only
\[
|g(x, t)|\le (\bar{\alpha} +\varepsilon)|t|^{p-1} +c_{\varepsilon}|t|^{q-1}
\quad\mbox{ a.e. in } \R^N, \mbox{ for all } t\in\R,
\]
but also
\begin{equation}   \label{Glimcom}
|G(x,t)|\le \frac{\bar{\alpha}+\varepsilon}{p}|t|^p +\frac{c_{\varepsilon}}{q} |t|^q 
\quad\qquad\mbox{  a.e. in } \R^N, \mbox{ for all } t\in\R,
\end{equation}
if assumption $(g_0)$ holds, too.
\end{remark}

\begin{remark}  
We note that \eqref{Gdefn}, assumptions $(g_0)$--$(g_1)$, $(g_3)$ and straightforward computations 
imply that for any $\varepsilon>0$ a function $\eta_{\varepsilon}\in L^{\infty}(\R^N)$
exists so that $\eta_{\varepsilon}>0$ for a.e. $x\in\R^N$ and
\begin{equation}  \label{Ggeq}
G(x,t)\ge\eta_{\varepsilon}(x) |t|^{\mu} \quad\mbox{ for a.e. } x\in\R^N \hbox{ if } |t|\ge \varepsilon.
\end{equation}
Hence, from \eqref{Gle} and \eqref{Ggeq} it follows that
\begin{equation}\label{p<q}
p<\mu \le q.
\end{equation}
\end{remark}

Now, we are ready to state our main result.

\begin{theorem}\label{ThExist}
Under assumptions $(V_1)$--$(V_3)$, $(h_0)$--$(h_4)$ and $(g_0)$--$(g_3)$,
with the growth exponent $q$ in $(g_1)$ such that 
\begin{equation} \label{subc0}
q <p^*,
\end{equation}
then problem \eqref{problem} admits at least one weak nontrivial bounded solution.
\end{theorem}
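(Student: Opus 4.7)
The plan is to follow the two-step procedure announced in the introduction. First, solve the Dirichlet problems \eqref{pbk} on each ball $B_k$ using a Mountain Pass argument in $X_{B_k}$, thereby producing a sequence $(u_k)_k$ of weak bounded critical points of $\J_{B_k}$. Second, extend each $u_k$ by zero to an element of $X$, extract a weak limit along a subsequence, and identify it as a nontrivial weak solution of \eqref{problem} in $\R^N$. The variational setting on $B_k$ is the bounded-domain analogue of Proposition \ref{C1}, and the weakened compactness condition of Definition \ref{wCPS} allows the use of Theorem \ref{mountainpass}.

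For the geometry of $\J_{B_k}$ I would pick $\eps>0$ so small that $\bar\alpha+\eps<\alpha_0/\tau_p^p$ and combine $(h_2)$ with the upper bound \eqref{Glimcom}: the Sobolev embeddings on $B_k$ together with \eqref{subc0} then yield $\J_{B_k}(u)\ge\alpha^*>0$ for $\|u\|_V=\varrho$, with both $\varrho$ and $\alpha^*$ independent of $k$. For the mountain pass valley I fix once and for all a $\varphi\in C_c^\infty(B_1)$, so that $\varphi\in X_{B_k}$ for every $k\ge 1$; assumption $(h_1)$ keeps $A(x,s\varphi(x))$ bounded on $\supp\varphi$ while \eqref{Ggeq} combined with $\mu>p$ forces $\J_{B_k}(s\varphi)\to-\infty$ as $s\to+\infty$, giving also a uniform upper bound $c_k\le c^*$ on the mountain pass levels. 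The $(wCPS)_{c_k}$ condition on $X_{B_k}$ is then established as in \cite{CP1,CP2}: the combination $\mu\J_{B_k}(u_n)-\langle d\J_{B_k}(u_n),u_n\rangle$ with $(h_3)$, $(g_3)$, $(V_1)$, $(h_2)$ yields a uniform bound in $W_{0,V}^{1,p}(B_k)$ for any $(CPS)$-sequence, and Lemma \ref{tecnico} upgrades this to a uniform $L^\infty$ bound via a Moser-type iteration (which works on the bounded domain thanks to \eqref{subc0}). Theorem \ref{mountainpass} applied to $\J_{B_k}$ then yields $u_k\in X_{B_k}$ with $c_k=\J_{B_k}(u_k)\in[\alpha^*,c^*]$ and $d\J_{B_k}(u_k)=0$.

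The same Ambrosetti--Rabinowitz computation applied at $u_k$ itself produces $\|u_k\|_V\le C$ independent of $k$, and Lemma \ref{tecnico} again delivers $|u_k|_\infty\le C$ with a constant depending only on $q$, $\alpha_0$ and $c^*$. Extending by zero, the sequence lies in a bounded set of $X$, and the compact embedding \eqref{comp} provided by $(V_2)$ gives, along a subsequence, $u_k\to u$ strongly in $L^r(\R^N)$ for every $p\le r<p^*$ and pointwise almost everywhere, while the $L^\infty$ bound persists so that $u\in X$.

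The main obstacle is to conclude that this weak limit $u$ is actually a critical point of $\J$: since both the principal part $-\divg(A(x,u)|\nabla u|^{p-2}\nabla u)$ and the quadratic-gradient term $\frac{1}{p}A_t(x,u)|\nabla u|^p$ are nonlinear in $\nabla u$, a.e. convergence of $\nabla u_k$ to $\nabla u$ is required. The standard way to extract it is to test $d\J_{B_k}(u_k)=0$ against a suitable truncation of $u_k-u$ (kept in $X_{B_k}$ with controlled $L^\infty$ norm so that the bounded-domain analogue of Proposition \ref{C1} applies) and exploit the coercivity built into $(h_2)$--$(h_4)$ together with Lemma \ref{lemmavett} in a Boccardo--Murat style argument, using the already established strong $L^r$ convergence to handle the lower-order terms. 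Once gradient convergence is in hand, Fatou/Vitali pass to the limit in \eqref{diff} and yield $d\J(u)=0$. Nontriviality then follows from the identity $c_k=\J_{B_k}(u_k)-\frac{1}{\mu}\langle d\J_{B_k}(u_k),u_k\rangle\ge\alpha^*>0$: the strong $L^r$ convergence afforded by $(V_2)$ prevents the mass from leaking to infinity (which is exactly why $(V_2)$ is essential for this last step) and forces the limit functional value to remain bounded away from zero, so that $u\not\equiv 0$.
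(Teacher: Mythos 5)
Your outline follows the paper's proof almost step for step: mountain pass on the balls $B_k$ with a path supported in $B_1$ giving levels in a fixed interval $[\alpha^*,\alpha^{**}]$, the Ambrosetti--Rabinowitz computation plus Lemma \ref{tecnico} for a $k$--independent bound in $X$, the compact embedding \eqref{comp} from $(V_2)$ for strong $L^r$ and a.e.\ convergence, a Boccardo--Murat type test (the paper uses $w_{R,k}=\chi_R\,\psi(u_k-u_\infty)$ with $\psi(t)=t\e^{\eta t^2}$ to absorb the $A_t|\nabla u_k|^p$ term) for local strong $W^{1,p}_V$ convergence, and passage to the limit against $C_c^\infty$ test functions plus density to get $d\J(u_\infty)=0$. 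So the route is essentially the paper's.

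The one place where your argument, as written, does not work is the nontriviality step. The identity $c_k=\J(u_k)-\frac1\mu\langle d\J(u_k),u_k\rangle\ \ge\ \alpha^*$ only gives (via $(h_3)$, $(g_3)$) a \emph{lower} bound of the type $c_k\ge c_1\|u_k\|_V^p$, and strong $L^r$ convergence does not by itself ``force the limit functional value to remain bounded away from zero'': the gradient part of $\J$ can a priori drop in the limit, and indeed the paper never shows $\J(u_\infty)\ge\alpha^*$. The actual mechanism is a contradiction argument that needs one more ingredient, namely $(h_4)$ used in $\langle d\J(u_k),u_k\rangle=0$: if $u_\infty\equiv 0$, then $(g_1)$ with $q<p^*$ and the strong $L^p\cap L^q$ convergence give $\int_{\R^N}g(x,u_k)u_k\,dx\to 0$; since
\[
0=\langle d\J(u_k),u_k\rangle\ \ge\ \frac{\alpha_0\alpha_2}{p}\int_{\R^N}|\nabla u_k|^p dx+\int_{\R^N}V(x)|u_k|^p dx-\int_{\R^N}g(x,u_k)u_k\,dx,
\]
this forces $\|u_k\|_V\to 0$, whence (using $(h_1)$, the uniform $L^\infty$ bound and $G\ge 0$ from $(g_3)$) $\J(u_k)\to 0$, contradicting $\J(u_k)\ge\alpha^*$. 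You should replace your closing sentence with this argument; the rest of the proposal is sound and matches the paper's strategy.
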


\begin{remark}
In our set of hypotheses, summing up \eqref{p<q} and \eqref{subc0} 
we have that
\[
1<p<\mu\leq q <p^*,
\]
with $\mu$ as in $(h_3)$ and $(g_3)$.
\end{remark}

As useful in the following, firstly we give a convergence result.

\begin{proposition}\label{convRN}
Suppose that hypotheses $(V_1)$--$(V_2)$, $(h_0)$--$(h_3)$,
$(g_0)$--$(g_1)$ and $(g_3)$ hold. 
Thus, taking any $\beta\in\R$, we have that any $(CPS)_{\beta}$--sequence $(u_n)_n\subset X$ 
is bounded in $W^{1, p}_V(\R^N)$. 
Furthermore, $u\in W^{1, p}_V(\R^N)$ exists such that, up to subsequences,
as $n\to +\infty$ the following limits hold:
\begin{align}   \label{conv1}
&u_n\rightharpoonup u \mbox{ weakly in } W_V^{1, p}(\R^N),\\    
\label{conv2}
&u_n\to u \mbox{ strongly in } L^r(\R^N) \mbox{ for each } r\in[p, p^*[ ,\\     
\label{conv3}
&u_n\to u \mbox{ a.e. in } \R^N.
\end{align}
\end{proposition}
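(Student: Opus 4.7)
\medskip

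\noindent\textbf{Proof proposal for Proposition \ref{convRN}.}

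The plan is to combine a standard Ambrosetti--Rabinowitz type computation (exploiting the interplay between $(h_3)$ and $(g_3)$, together with $(h_2)$) to bootstrap $(CPS)_\beta$--information into an a priori bound in $W^{1,p}_V(\R^N)$, and then to invoke the compact embedding \eqref{comp} to upgrade weak to strong convergence in $L^r$ for $r<p^*$. Take any $(CPS)_\beta$--sequence $(u_n)_n\subset X$; from the Cerami condition $\|d\J(u_n)\|_{X'}(1+\|u_n\|_X)\to 0$, and since $u_n\in X$ one has $|\langle d\J(u_n),u_n\rangle|\le \|d\J(u_n)\|_{X'}\|u_n\|_X\to 0$, so the pairing $\langle d\J(u_n),u_n\rangle$ is legitimate and infinitesimal. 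Using \eqref{funct} and \eqref{diff} to compute
\[
\mu\J(u_n)-\langle d\J(u_n),u_n\rangle
=\frac{1}{p}\!\int_{\R^N}\!\!\bigl[(\mu-p)A(x,u_n)-A_t(x,u_n)u_n\bigr]|\nabla u_n|^p\,dx
+\frac{\mu-p}{p}|u_n|_{V,p}^p+\!\int_{\R^N}\!\!\bigl[g(x,u_n)u_n-\mu G(x,u_n)\bigr]dx,
\]
assumption $(h_3)$ bounds the first bracket from below by $\alpha_1 A(x,u_n)\ge \alpha_0\alpha_1$ via $(h_2)$, while $(g_3)$ makes the last integrand nonnegative. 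Remark \ref{RemV1} gives $\mu-p\ge\alpha_1$, so the right--hand side dominates $\frac{\alpha_0\alpha_1}{p}\bigl(|\nabla u_n|_p^p+|u_n|_{V,p}^p\bigr)=\frac{\alpha_0\alpha_1}{p}\|u_n\|_V^p$.

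The left--hand side, on the other hand, is bounded because $\J(u_n)\to\beta$ and $\langle d\J(u_n),u_n\rangle\to 0$. This yields a uniform bound on $\|u_n\|_V$, proving the first assertion. Since $p>1$, the space $W_V^{1,p}(\R^N)$ is reflexive (it embeds isometrically as a closed subspace of the reflexive product $L^p(\R^N;\R^N)\times L_V^p(\R^N)$ via $u\mapsto(\nabla u,u)$), so a subsequence of $(u_n)_n$ converges weakly to some $u\in W_V^{1,p}(\R^N)$, which is \eqref{conv1}. Invoking $(V_2)$, the compact embedding \eqref{comp} of Theorem \ref{embed} upgrades this to strong convergence in $L^r(\R^N)$ for every $p\le r<p^*$, giving \eqref{conv2}. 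Finally, extracting one further subsequence out of the $L^p$--strong convergence yields the pointwise a.e.\ limit \eqref{conv3}.

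The main (mild) obstacle is the first step: the coefficient $A(x,u_n)$ in the principal part depends on $u_n$, so a naive pairing would not cancel. The crucial observation is that $(h_3)$ has been designed precisely to produce the combination $(\mu-p)A-A_t t$ appearing in $\mu\J(u_n)-\langle d\J(u_n),u_n\rangle$, and this is where the lower bound $\alpha_1 A\ge \alpha_0\alpha_1$ from $(h_2)$ is essential to recover control on $|\nabla u_n|_p^p$; everything else is the routine Cerami/Ambrosetti--Rabinowitz coercivity argument followed by a standard weak/compact extraction. Note that $(V_3)$ is not needed here; only $(V_1)$--$(V_2)$ are used, via reflexivity and \eqref{comp}.
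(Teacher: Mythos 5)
Your proposal is correct and follows essentially the same route as the paper: the identity $\mu\J(u_n)-\langle d\J(u_n),u_n\rangle$ combined with $(h_2)$--$(h_3)$ and $(g_3)$ (plus Remark \ref{RemV1}) yields the uniform bound $c\,\|u_n\|_V^p\le \mu\beta+\varepsilon_n$, and then \eqref{conv1}--\eqref{conv3} follow from reflexivity and the compact embedding \eqref{comp}. The only difference is that you spell out the reflexivity/extraction details that the paper leaves implicit, which is harmless.
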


\begin{proof}
Taking $\beta\in\R$, let $(u_n)_n\subset X$ be a $(CPS)_{\beta}$--sequence of $\J$, i.e.,
\begin{equation}  \label{beta0}
\J(u_n)\to\beta\quad\mbox{ and }\quad \|d\J(u_n)\|_{X^{\prime}}(1+\|u_n\|_X)\to 0 \qquad\mbox{ as } n\to +\infty.
\end{equation}
Thus, from \eqref{funct}, \eqref{diff}, \eqref{beta0}, 
assumptions $(h_2)$--$(h_3)$, $(g_3)$ and also \eqref{weightnorm},
direct computations give
\[
\begin{split}
\mu\beta +\varepsilon_n = &\mu\J(u_n) -\langle d\J(u_n), u_n\rangle\\
\ge &\frac{\alpha_0\alpha_1}{p}\int_{\R^N}|\nabla u_n|^p dx+\frac{\mu-p}{p}\int_{\R^N} V(x)|u_n|^p dx
\ge c_1\|u_n\|_V^p,
\end{split}
\]
for a suitable constant $c_1 > 0$. 
So, $(u_n)_n$ is bounded in $W^{1, p}_V(\R^N)$ and \eqref{conv1}--\eqref{conv3} follows from \eqref{comp}.
\end{proof}

Now, we prove that the geometrical assumptions needed 
to apply Theorem \ref{mountainpass} hold. To this aim, 
we start giving the following lemma.

\begin{lemma}
Under assumptions $(h_0)$ and $(h_2)$--$(h_3)$, we have that
\begin{equation}\label{Axst}
A(x,st) \le s^{\mu-p-\alpha_1}A(x,t) \quad\mbox{ a.e. in } \R^N, \; \mbox{ for all $t\in\R$
and $s\ge 1$}.
\end{equation}
\end{lemma}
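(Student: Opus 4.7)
The plan is to fix $x$ almost everywhere in $\R^N$ and $t\in\R$, and study the one--variable function $\varphi(s):=A(x,st)$ for $s\ge 1$. Hypothesis $(h_0)$ ensures that $\varphi$ is $C^1$ on $(0,+\infty)$ with
\[
\varphi'(s)\ =\ A_t(x,st)\, t,
\]
so multiplying by $s>0$ gives $s\varphi'(s)=A_t(x,st)\,(st)$. Now we plug the point $st$ (in place of $t$) into assumption $(h_3)$, which rewritten reads
\[
A_t(x,r)\,r\ \le\ (\mu-p-\alpha_1)\,A(x,r)\qquad\mbox{for a.e. } x\in\R^N,\ \mbox{all } r\in\R,
\]
and is valid regardless of the sign of $r$. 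Choosing $r=st$ we obtain
\[
s\,\varphi'(s)\ \le\ (\mu-p-\alpha_1)\,\varphi(s)\qquad\mbox{for all } s\ge 1.
\]

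Next I would invoke $(h_2)$: since $A(x,\cdot)\ge\alpha_0>0$, we have $\varphi(s)\ge\alpha_0>0$, so we may divide by $\varphi(s)\cdot s$ to get
\[
\frac{\varphi'(s)}{\varphi(s)}\ \le\ \frac{\mu-p-\alpha_1}{s}\qquad\mbox{for all } s\ge 1.
\]
Recalling from Remark \ref{RemV1} that $\mu-p-\alpha_1\ge 0$, I would integrate this differential inequality from $1$ to $s\ge 1$, obtaining
\[
\ln\varphi(s)-\ln\varphi(1)\ \le\ (\mu-p-\alpha_1)\,\ln s,
\]
which, after exponentiating, yields exactly
\[
A(x,st)\ =\ \varphi(s)\ \le\ s^{\mu-p-\alpha_1}\,\varphi(1)\ =\ s^{\mu-p-\alpha_1}\,A(x,t).
\]

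The only borderline case is $t=0$: then $A(x,st)=A(x,0)$ while the right--hand side equals $s^{\mu-p-\alpha_1}A(x,0)\ge A(x,0)$ (since $s\ge 1$ and $\mu-p-\alpha_1\ge 0$), so \eqref{Axst} holds trivially. I do not foresee a serious obstacle: the argument is a standard Gronwall--type integration, and the two potential subtleties, namely the sign of $t$ and the admissibility of the division in the final step, are both handled respectively by the fact that $(h_3)$ is stated for every real argument and by the strict positivity provided by $(h_2)$.
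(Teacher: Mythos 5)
Your proof is correct and follows essentially the same route as the paper: differentiate $s\mapsto A(x,st)$, use $(h_3)$ at the point $st$ to obtain the differential inequality, divide by $A(x,st)>0$ thanks to $(h_2)$, and integrate the logarithmic derivative from $1$ to $s$. The extra remark on the case $t=0$ is harmless but not needed, since the same integration argument covers it.
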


\begin{proof}
Taking $t\in\R$, for a.e. $x\in\R^N$ and for all $s>0$, assumptions $(h_0)$ and $(h_3)$ imply that
\[
\frac{d}{ds} A(x, st) = A_t(x, st) t\le\ \frac{\mu - p-\alpha_1}{s} \ A(x,st).
\]
Hence, from hypothesis $(h_2)$ we obtain that 
\[
\frac{\frac{d}{ds} A(x, st)}{ A(x, st)}\ \le\ \frac{\mu - p-\alpha_1}{s},
\]
where Remark \ref{RemV1} ensures that $\mu-p-\alpha_1\ge 0$.
Thus, if $s \ge 1$ by integrating we obtain the desired result.
\end{proof}

\begin{proposition} \label{gerho}
Under assumptions $(V_1)$, $(h_0)$--$(h_2)$, $(g_0)$--$(g_2)$, if $p<q<p^*$ 
then two positive constants $\varrho$, $\alpha^* > 0$ exist so that
\begin{equation}\label{geoN}
u\in X,\; \|u\|_{V}=\varrho \quad\implies\quad \J(u)\ge\alpha^*.
\end{equation}
\end{proposition}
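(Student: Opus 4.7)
The plan is to bound $\J(u)$ from below by a polynomial in $\|u\|_V$ whose dominant term near the origin is positive, using the coercivity provided by $(h_2)$ on the principal part, the $W_V^{1,p}$ structure on the potential term, and the sharp control $(g_2)$ on $G$ near zero.

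First I would drop the principal part using hypothesis $(h_2)$:
\[
\J(u) \ \ge\ \frac{\alpha_0}{p}\int_{\R^N}|\nabla u|^p\,dx + \frac{1}{p}\int_{\R^N} V(x)|u|^p\,dx - \int_{\R^N} G(x,u)\,dx.
\]
Since by Remark \ref{RemV1} we may assume $\alpha_0\le 1$, and recalling definition \eqref{weightnorm}, this yields
\[
\J(u) \ \ge\ \frac{\alpha_0}{p}\,\|u\|_V^p - \int_{\R^N} G(x,u)\,dx.
\]

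Next I would control the nonlinear term via the growth estimate \eqref{Glimcom}: given any $\varepsilon>0$ there is $c_\varepsilon>0$ with
\[
\int_{\R^N}|G(x,u)|\,dx \ \le\ \frac{\bar\alpha+\varepsilon}{p}\,|u|_p^p + \frac{c_\varepsilon}{q}\,|u|_q^q.
\]
Here I invoke the continuous embeddings \eqref{sob} both for $r=p$ and for $r=q$ (available because $p\le q<p^*$), obtaining constants $\tau_p,\tau_q>0$ with $|u|_p\le \tau_p\|u\|_V$ and $|u|_q\le \tau_q\|u\|_V$. Combining,
\[
\J(u) \ \ge\ \frac{\alpha_0-(\bar\alpha+\varepsilon)\tau_p^p}{p}\,\|u\|_V^p \ -\ \frac{c_\varepsilon\tau_q^q}{q}\,\|u\|_V^q.
\]

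Now the decisive step uses assumption $(g_2)$: since $\bar\alpha\,\tau_p^p<\alpha_0$, I can fix $\varepsilon>0$ small so that $\alpha_0-(\bar\alpha+\varepsilon)\tau_p^p>0$. Setting
\[
c_2 := \frac{\alpha_0-(\bar\alpha+\varepsilon)\tau_p^p}{p}>0,\qquad c_3:=\frac{c_\varepsilon\tau_q^q}{q}>0,
\]
I get $\J(u) \ge \|u\|_V^p\bigl(c_2-c_3\|u\|_V^{q-p}\bigr)$. Because $q>p$, the function $t\mapsto t^p(c_2-c_3 t^{q-p})$ is strictly positive for small $t>0$, so I choose $\varrho>0$ small enough that $c_2-c_3\varrho^{q-p}>0$ and set $\alpha^*:=\varrho^p(c_2-c_3\varrho^{q-p})>0$; then \eqref{geoN} holds for every $u\in X$ with $\|u\|_V=\varrho$. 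There is no real obstacle here beyond chasing constants; the only delicate point is making sure $(g_2)$ is used with the correct embedding constant $\tau_p$, which is precisely how the hypothesis is formulated.
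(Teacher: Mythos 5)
Your argument is correct and follows essentially the same route as the paper's proof: drop the principal part via $(h_2)$ with $\alpha_0\le 1$, estimate $\int G(x,u)\,dx$ through \eqref{Glimcom} and the embeddings \eqref{sob} for $r=p$ and $r=q$, fix $\varepsilon$ small using $(g_2)$, and conclude from $p<q$ by choosing $\varrho$ small. The only differences are cosmetic (order in which $\varepsilon$ is fixed and the naming of constants), so nothing further is needed.
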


\begin{proof}
Let $u\in X$ and, from $(g_2)$, take $\varepsilon >0$ such that 
$\bar{\alpha} + \varepsilon <\frac{\alpha_0}{\tau_p^p}$,
i.e.,
\begin{equation}\label{geoN1}
\alpha_0 - (\bar{\alpha}+\varepsilon) \tau_p^p > 0.
\end{equation}
Then, from $(h_2)$ with $\alpha_0 \le 1$ (see Remark \ref{RemV1}), 
\eqref{Glimcom} and \eqref{sob}, definitions \eqref{weightnorm} and \eqref{funct}
imply that
\[
\begin{split}
\J(u)&\ge\frac{\alpha_0}{p}\left(\int_{\R^N}|\nabla u|^p dx + \int_{\R^N} V(x)|u|^p dx\right) 
-\frac{\bar{\alpha}+\varepsilon}{p}\int_{\R}|u|^p dx -\frac{c_{\varepsilon}}{\mu}\int_{\R^N} |u|^q dx\\
&\ge\ \frac{1}{p}\ \big(\alpha_0 - (\bar{\alpha}+\varepsilon) \tau_p^p\big) \|u\|_V^p 
- \frac{c_{\varepsilon}}{\mu} \tau_q^q \|u\|_V^q.
\end{split}
\]
Then, \eqref{geoN1} and $p<q$
allow us to find two positive constants $\varrho$ and $\alpha^*$ 
so that \eqref{geoN} holds.
\end{proof}

\begin{proposition} \label{menoinf}
Assume that $(V_1)$, $(h_0)$, $(h_2)$--$(h_3)$, $(g_0)$--$(g_1)$ and $(g_3)$ hold.
Then, fixing $\bar{u}\in X\setminus\{0\}$,
it follows that
\[
\J(s\bar{u})\to -\infty \quad\mbox{ as } s\to +\infty.
\]
\end{proposition}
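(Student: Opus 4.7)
The plan is to expand $\J(s\bar u)$ as a sum of three terms in $s$, bound the two positive contributions above by a strictly sub--$\mu$ power of $s$, and bound the superlinear term $\int G(x,s\bar u)\,dx$ below by a positive constant times $s^\mu$. The desired divergence to $-\infty$ then follows from the ordering of the exponents, since by $(h_3)$ (and Remark \ref{RemV1}) we have $\alpha_1>0$ and $\mu>p$, so both $\mu-\alpha_1$ and $p$ are strictly smaller than $\mu$.

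Concretely, using $|\nabla(s\bar u)|^p = s^p|\nabla\bar u|^p$, \eqref{funct} gives
\[
\J(s\bar u) = \frac{s^p}{p}\int_{\R^N} A(x,s\bar u)|\nabla\bar u|^p\,dx + \frac{s^p}{p}\int_{\R^N} V(x)|\bar u|^p\,dx - \int_{\R^N} G(x,s\bar u)\,dx.
\]
For $s\ge 1$ I would apply the pointwise bound \eqref{Axst} with $t=\bar u(x)$, obtaining $A(x,s\bar u(x))\le s^{\mu-p-\alpha_1}A(x,\bar u(x))$. Since $\bar u\in X$ gives $|\bar u|_\infty<\infty$, hypothesis $(h_1)$ with $\rho=|\bar u|_\infty$ makes $A(\cdot,\bar u)|\nabla\bar u|^p$ integrable, so the first term is bounded above by $C_1 s^{\mu-\alpha_1}$ for a constant $C_1=C_1(\bar u)>0$. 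The second term equals $\frac{s^p}{p}|\bar u|_{V,p}^p =: C_2 s^p$.

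For the third term I would fix $\varepsilon>0$ such that the set $E_\varepsilon:=\{x\in\R^N:|\bar u(x)|\ge\varepsilon\}$ has positive measure; this is possible because $\bar u\not\equiv 0$, and Chebyshev's inequality (combined with $\bar u\in L^p(\R^N)$) ensures that $|E_\varepsilon|<+\infty$. Then for every $s\ge 1$ one has $|s\bar u(x)|\ge\varepsilon$ on $E_\varepsilon$, so \eqref{Ggeq} yields $G(x,s\bar u)\ge s^\mu\eta_\varepsilon(x)|\bar u|^\mu$ on $E_\varepsilon$, while $(g_3)$ guarantees $G(x,s\bar u)\ge 0$ elsewhere. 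Setting $c_0:=\int_{E_\varepsilon}\eta_\varepsilon(x)|\bar u|^\mu\,dx$, which is strictly positive since $\eta_\varepsilon>0$ a.e.\ and $|\bar u|^\mu>0$ on $E_\varepsilon$, this gives $\int_{\R^N} G(x,s\bar u)\,dx \ge c_0 s^\mu$.

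Putting the three estimates together produces
\[
\J(s\bar u)\ \le\ \frac{C_1}{p}\,s^{\mu-\alpha_1}+\frac{C_2}{p}\,s^p - c_0\,s^\mu\qquad\text{for all } s\ge 1,
\]
and the right--hand side diverges to $-\infty$ as $s\to+\infty$. The only delicate point is controlling the $A$--term: the superlinear factor $s^p$ coming from $|\nabla(s\bar u)|^p$ combines with the growth of $A(x,s\bar u)$ in $s$, so one needs a rate strictly below $s^\mu$. This is precisely what the sharper exponent $\mu-p-\alpha_1$ in \eqref{Axst} delivers, and it is here that the strict positivity of $\alpha_1$ in $(h_3)$ is genuinely used.
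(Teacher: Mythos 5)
Your proof is correct and follows essentially the same route as the paper's: bound the principal term via \eqref{Axst} to get growth of order $s^{\mu-\alpha_1}$, bound the potential term by $s^p$, and use \eqref{Ggeq} on the set where $|\bar u|\ge\varepsilon$ (with $(g_3)$ giving nonnegativity of $G$ elsewhere) to obtain a term $-c_0 s^\mu$, concluding since $\alpha_1>0$ and $\mu>p$. The only cosmetic difference is that you apply \eqref{Ggeq} with the level $\varepsilon$ itself (valid for all $s\ge1$), whereas the paper invokes $\eta_1$; this changes nothing of substance.
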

\begin{proof}
Since $\bar{u}\neq 0$, there exists $\varepsilon>0$ such that
\[
|\Omega^{\bar{u}}_{\varepsilon}|>0  \qquad \hbox{with }\quad \Omega^{\bar{u}}_{\varepsilon}= \{ x\in\R^N :\ |\bar{u}(x)|>\varepsilon\}.
\]
From \eqref{funct}, \eqref{Axst}, inequality \eqref{Ggeq} and direct computations, for any $s\ge 1$ we have that
\[
\begin{split}
\J(s\bar{u})&=\frac{s^p}{p}\int_{\R^N} A(x, s\bar{u})|\nabla \bar{u}|^p dx 
+\frac{s^p}{p}\int_{\R^N} V(x) |\bar{u}|^p dx -\int_{\R^N} G(x, s\bar{u}) dx\\
&\le \frac{s^{\mu-\alpha_1}}{p}\int_{\R^N} A(x, \bar{u})|\nabla \bar{u}|^p dx 
+\frac{s^p}{p} \int_{\R^N} V(x)|\bar{u}|^p dx
- s^{\mu}\int_{\Omega^{\bar{u}}_{\varepsilon}} \eta_1(x) |\bar{u}|^{\mu} dx.
\end{split}
\]
Thus, being $\int_{\Omega^{\bar{u}}_{\varepsilon}} \eta_1(x) |\bar{u}|^{\mu} dx > 0$, 
inequality \eqref{p<q}, together with assumption $\alpha_1>0$, gives the desired result.
\end{proof}


\section{An existence result on bounded domains}\label{sectionbdd}

From now on, let $\Omega$ denote an open bounded domain in $\R^N$. Thus, we define
\begin{equation}\label{Xlim}
X_{\Omega} = W_{0,V}^{1, p}(\Omega)\cap L^{\infty}(\Omega)
\end{equation}
endowed with the norm
\begin{equation}\label{normB}
\|u\|_{X_{\Omega}} =\|u\|_{\Omega,V} +|u|_{\Omega,\infty} \quad\mbox{ for any } u\in X_{\Omega}
\end{equation}
and with dual space $X_{\Omega}^{\prime}$. \\
Actually, since any function $u \in  X_{\Omega}$ can be trivially extended 
to a function $\tilde{u}\in X$ just assuming $\tilde{u}(x) = 0$ for all $x \in \R^N \setminus \Omega$, 
then
\begin{equation}\label{Xequ}
\|\tilde{u}\| = \|u\|_{\Omega}, \quad
\|\tilde{u}\|_V = \|u\|_{\Omega,V}, 
\quad  |\tilde{u}|_{\infty} = |u|_{\Omega,\infty},\quad
\|\tilde{u}\|_X = \|u\|_{X_\Omega}.
\end{equation}

\begin{remark} \label{Rmbdd}
As $\Omega$ is a bounded domain, 
not only $\|u\|_{\Omega}$ and $|\nabla u|_{\Omega,p}$ are equivalent norms  
but also, if assumption $(V_3)$ holds, a constant $c_{\Omega} \ge 1$ exists such that
\[
\|u\|_{\Omega,V}^p = \int_{\Omega} |\nabla u|^p dx + \int_{\Omega} V(x) |u|^p dx
\le \int_{\Omega} |\nabla u|^p dx +c_{\Omega} \int_{\Omega}|u|^p dx\le c_\Omega \|u\|_{\Omega}^p,
\]
which, together with \eqref{WVem} and \eqref{Xequ}, implies that 
the norms $\|\cdot\|_{\Omega,V}$ and $\|\cdot\|_{\Omega}$ are equivalent, too. 
\end{remark}

From \eqref{G0=0} and \eqref{funct} it follows that the restriction of the functional 
$\J$ to $X_\Omega$, namely 
\[
\J_\Omega = \J|_{X_{\Omega}},
\]
is such that
\begin{equation}  \label{funOm}
\J_{\Omega}(u) = \frac1p \int_{\Omega} A(x, u)|\nabla u|^p dx 
+ \frac1p \int_{\Omega} V(x) |u|^p dx -\int_{\Omega} G(x, u) dx, \quad u\in X_{\Omega}.
\end{equation}

We note that, setting
\begin{equation}\label{gtilde}
\tilde{g}(x,t) = g(x,t) - V(x) |t|^{p-2} t
\quad\mbox{ for a.e. } x \in\R^N, \; \mbox{ all $t\in\R$,}
\end{equation}
from $(g_0)$ we have that $\tilde{g}$ is a $\mathcal{C}^0$--Caratheodory function
such that
\begin{equation}\label{gtilde1}
\tilde{G}(x,t) = \int_0^t \tilde{g}(x,s) ds = 
G(x,t) - \frac1p V(x) |t|^{p} 
\quad\mbox{ for a.e. } x \in\R^N, \; \mbox{ all $t\in\R$.}
\end{equation}
Then, by means of assumptions $(g_1)$, $(V_1)$ and $(V_3)$, we have that
\begin{equation}\label{gtildeup}
|\tilde{g}(x, t)|\le (a_1 + |V|_{\Omega,\infty}) |t|^{p-1} +a_2 |t|^{q-1}
\quad\mbox{ for a.e. } x \in\R^N, \; \mbox{ all $t\in\R$,}
\end{equation}
and from \eqref{funOm} and \eqref{gtilde1}
we have that 
\begin{equation}  \label{funOm1}
\J_{\Omega}(u) = \frac1p \int_{\Omega} A(x, u)|\nabla u|^p dx 
- \int_{\Omega} \tilde{G}(x,u) dx, \quad u\in X_{\Omega}.
\end{equation}
Hence, arguing as in \cite[Proposition 3.1]{CP2}, 
it follows that $\J_{\Omega}:X_{\Omega}\to\R$ is a 
$\mathcal{C}^1$ functional such that, for any $u$, $v\in X_{\Omega}$, 
its  Fréchet differential in $u$ along the direction $v$ is given by
\begin{equation}\label{diffOm}
\begin{split}
\langle d\J_{\Omega}(u), v\rangle =\ &\int_{\Omega} A(x, u)|\nabla u|^{p-2} \nabla u\cdot\nabla v dx 
+\frac1p \int_{\Omega} A_t(x, u) v|\nabla u|^p dx\\
& +\int_{\Omega} V(x) |u|^{p-2} u v dx -\int_{\Omega} g(x, u) v dx.
\end{split}
\end{equation}

In order to apply Theorem \ref{mountainpass} to functional
$\J_\Omega$ as in \eqref{funOm}, 
we start proving that it satisfies the $(wCPS)$ condition in $\R$ 
(see Definition \ref{wCPS}).

\begin{proposition} \label{wCPSbdd}
Suppose that hypotheses $(V_1)$, $(V_3)$, $(h_0)$--$(h_4)$, $(g_0)$--$(g_1)$
and $(g_3)$ hold. Then, if also \eqref{subc0} is verified,
functional $\J_{\Omega}$ satisfies the $(wCPS)$ condition in $\R$.
\end{proposition}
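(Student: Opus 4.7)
The plan is to: (i) bound a $(CPS)_\beta$-sequence in $W^{1,p}_{0,V}(\Omega)$ by the usual Ambrosetti--Rabinowitz trick; (ii) invoke the technical $L^\infty$-lemma mentioned in the introduction (Lemma \ref{tecnico}) to extract a uniformly bounded subsequence with weak and a.e.\ limits; (iii) upgrade weak to strong convergence in $W^{1,p}_{0,V}(\Omega)$ via pseudo-monotonicity of the principal part, handling the delicate $A_t$-term with a truncation argument; and (iv) pass to the limit as in Proposition \ref{C1} to identify the limit as a critical point at level $\beta$.

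For step (i), given a $(CPS)_\beta$-sequence $(u_n)_n \subset X_\Omega$, the inequality $\|u_n\|_{X_\Omega} \ge \|u_n\|_{\Omega,V}$ forces $\langle d\J_\Omega(u_n), u_n\rangle \to 0$; reasoning as in Proposition \ref{convRN} but using \eqref{funOm}--\eqref{diffOm}, the combination $\mu\J_\Omega(u_n) - \langle d\J_\Omega(u_n), u_n\rangle$ together with $(h_2)$, $(h_3)$, $(g_3)$ yields $c_1\|u_n\|_{\Omega,V}^p \le \mu\beta + o(1)$ for some $c_1 > 0$. For step (ii), since the $(CPS)$ property does not itself deliver an $L^\infty$-bound (the $X_\Omega'$-norm differs from the $W^{1,p}_{0,V}$-dual), Lemma \ref{tecnico} supplies $M > 0$ with $|u_n|_{\Omega,\infty} \le M$ along a subsequence. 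Reflexivity, Rellich--Kondrachov, and the subcriticality \eqref{subc0} then give $u \in X_\Omega$ with, up to a further subsequence, $u_n \rightharpoonup u$ in $W^{1,p}_{0,V}(\Omega)$, $u_n \to u$ in $L^r(\Omega)$ for every $p \le r \le q$, and $u_n \to u$ a.e.\ in $\Omega$ (Fatou gives $|u|_{\Omega,\infty}\le M$).

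Step (iii) is the heart of the argument. Testing with $u_n - u \in X_\Omega$ (whose $X_\Omega$-norm is controlled by $C(1 + \|u_n\|_{X_\Omega})$) gives $\langle d\J_\Omega(u_n), u_n - u\rangle \to 0$. Writing out \eqref{diffOm}, the terms $\int V|u_n|^{p-2}u_n(u_n-u)$ and $\int g(x,u_n)(u_n-u)$ vanish by strong $L^p$ and $L^q$ convergence combined with $(g_1)$ and $(V_3)$; the linearized term $\int A(x,u_n)|\nabla u|^{p-2}\nabla u \cdot \nabla(u_n - u)$ vanishes by dominated convergence (using $(h_1)$ and $|u_n|_{\Omega,\infty}\le M$) tested against the weak $L^p$-convergence of $\nabla(u_n - u)$ to zero. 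The surviving balance is
\[
\int_\Omega A(x,u_n)\bigl(|\nabla u_n|^{p-2}\nabla u_n - |\nabla u|^{p-2}\nabla u\bigr)\!\cdot\! \nabla(u_n - u)\, dx + \frac{1}{p}\int_\Omega A_t(x,u_n)(u_n - u)|\nabla u_n|^p\, dx \to 0.
\]
The first piece is nonnegative by $(h_2)$ and the elementary vector inequalities behind Lemma \ref{lemmavett}, so once the second is controlled, Simon-type estimates combined with $A(x,u_n) \ge \alpha_0$ give $\nabla u_n \to \nabla u$ in $L^p(\Omega)$; together with strong $L^p_V$-convergence of $u_n$ (which follows from the $L^p$-convergence, the $L^\infty$-bound and $(V_3)$), this yields $\|u_n - u\|_{\Omega,V} \to 0$.

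The genuine difficulty, and the main obstacle of the proof, is controlling the $A_t$-term: $|\nabla u_n|^p$ is only bounded in $L^1(\Omega)$ and lacks equi-integrability, while $u_n - u$ converges merely a.e. My plan is to test instead with truncations $T_k(u_n - u) \in X_\Omega$ and to use the structural bound $(h_4)$ to split the $A_t$-contribution into a piece that is absorbed against the monotone principal part (via the combination $pA + A_t t \ge \alpha_2 A$ applied pointwise) and a residue that vanishes as $k \to +\infty$ uniformly in $n$; this step crucially relies on the uniform $L^\infty$-bound provided by Lemma \ref{tecnico}. Once $\|u_n - u\|_{\Omega,V} \to 0$ is obtained, the bounded-domain counterpart of Proposition \ref{C1} applied to $\J_\Omega$ (with the same $M$) delivers $\J_\Omega(u_n) \to \J_\Omega(u)$ and $\|d\J_\Omega(u_n) - d\J_\Omega(u)\|_{X_\Omega'} \to 0$; hence $\J_\Omega(u) = \beta$ and $d\J_\Omega(u) = 0$, fulfilling both items of Definition \ref{wCPS}.
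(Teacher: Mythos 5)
Your route is genuinely different from the paper's: the paper does not re-prove the $(wCPS)$ condition from scratch, but rewrites $\J_\Omega$ in the form \eqref{funOm1} with $\tilde g(x,t)=g(x,t)-V(x)|t|^{p-2}t$ (so that, thanks to $(V_3)$ on the bounded domain, $\tilde g$ has the growth \eqref{gtildeup} and inherits the Ambrosetti--Rabinowitz inequality $\mu\tilde G\le \tilde g t$) and then simply invokes \cite[Proposition 4.6]{CP2}. Your plan is essentially to reconstruct that cited proof directly, and steps (i) and the monotonicity part of (iii) are fine. However, there is a genuine gap at your step (ii), and it propagates through (iii)--(iv): you claim that Lemma \ref{tecnico} yields a \emph{uniform} bound $|u_n|_{\Omega,\infty}\le M$ along the $(CPS)_\beta$--sequence. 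Lemma \ref{tecnico} requires the homogeneous Caccioppoli-type inequality \eqref{LUuno}, and in this paper it is verified (Proposition \ref{boundX}) only for \emph{exact} critical points, by testing with $R^+_m u$ and using $(h_4)$, $(g_1)$, $(g_3)$. For a $(CPS)_\beta$--sequence the same test produces an additional error term $\varepsilon_n=\|d\J_\Omega(u_n)\|_{X_\Omega'}(1+\|u_n\|_{X_\Omega})$, which is small in $n$ but is an \emph{additive} constant independent of $m$; it cannot be absorbed into the right-hand side of \eqref{LUuno} for levels $m$ close to $\esssup u_n$, where $|\Omega^+_m|$ and $\int_{\Omega^+_m}|u_n|^q$ are arbitrarily small, so the lemma does not apply as stated. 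Indeed, the whole point of Definition \ref{wCPS} (and of the argument in \cite{CP2}) is that one does \emph{not} obtain $L^\infty$--compactness or even uniform $L^\infty$--boundedness of the sequence: only the weak limit $u$ is shown to lie in $L^\infty(\Omega)$ (by passing to the limit in the truncated inequalities and applying the Ladyzhenskaya--Ural'tseva-type lemma to $u$ itself), and convergence is then established only in the $W^{1,p}_0$--norm.

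Because of this, the later steps as written do not stand: the dominated-convergence arguments "using $|u_n|_{\Omega,\infty}\le M$", the treatment of the $A_t$--term, and the final appeal to a Proposition \ref{C1}--type result "with the same $M$" all rest on the unavailable uniform bound; without it one must argue as in \cite{CP2} (and as in Proposition \ref{limwpr} of this paper, where the uniform bound \emph{is} available because the $u_k$ are exact critical points), i.e.\ via the Boccardo--Murat--Puel test function $\psi(t)=t\e^{\eta t^2}$ applied to suitable truncations, and then identify $d\J_\Omega(u)=0$ and $\J_\Omega(u)=\beta$ by a separate limiting argument rather than by continuity of $d\J_\Omega$ along the sequence. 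A further local flaw: you propose to absorb the term $\frac1p\int_\Omega A_t(x,u_n)(u_n-u)|\nabla u_n|^p\,dx$ "pointwise via $(h_4)$", but $(h_4)$ controls the combination $pA(x,t)+A_t(x,t)\,t$, not $A_t(x,t)(t-u(x))$, so it gives no sign or size information on this integrand; the correct device is the bound $|A_t(x,u_n)|\le c\,A(x,u_n)/\alpha_0$ coming from $(h_1)$--$(h_2)$ on the sets where $|u_n|$ is controlled, combined with the inequality $\beta_1\psi'-\beta_2|\psi|>\tfrac{\beta_1}2$. Either repair the $L^\infty$ step (e.g.\ by proving a version of Lemma \ref{tecnico} allowing an additive perturbation $\varepsilon_n\le 1$ in \eqref{LUuno}, which is not in the paper) or restructure the proof along the lines of \cite{CP2}, which is what the paper in fact does by citation.
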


\begin{proof}
Firstly, we note that from Remark \ref{Rmbdd}
the norm in \eqref{normB} can be replaced with the equivalent one, 
still denoted by $\|\cdot\|_{X_{\Omega}}$, given by
\[
\|u\|_{X_{\Omega}} = |\nabla u|_{\Omega,p} + |u|_{\Omega,\infty} 
\quad\mbox{ for any } u\in X_{\Omega}.
\]
Then, being $\mu > p$, from $(g_3)$, 
\eqref{gtilde}--\eqref{gtilde1} and direct computations 
we obtain that
\[
\mu \tilde{G}(x,t) \le \tilde{g}(x,t) t
\quad\mbox{ for a.e. } x \in\R^N, \; \mbox{ all $t\in\R$.}
\]
Thus, from conditions $(h_0)$--$(h_4)$ and \eqref{gtildeup},
together with \eqref{subc0}, we have that 
\cite[Proposition 4.6]{CP2} applies to functional $\J_\Omega$ written as in 
\eqref{funOm1}.
\end{proof}

\begin{remark}
In order to apply 
\cite[Proposition 4.6]{CP2} to functional $\J_\Omega$ in $X_\Omega$
condition $\tilde{G}(x,t) > 0$ for a.e. $x \in\Omega$ if $|t| \ge R$,
for a suitable $R >0$, is not necessary (such an assumption
is required in \cite{CP2}
for proving one of the geometric conditions of the Mountain Pass Theorem).
\end{remark}

For simplicity, let us assume that $0 \in \Omega$ so $\eps^*>0$
exists such that $B_{\varepsilon^*}\subset\Omega$. 

\begin{remark}  \label{RemNew}
Assume that the hypotheses of Propositions \ref{gerho} and \ref{menoinf} are
satisfied and fix $\bar{u} \in X\setminus\{0\}$ with $\supp \bar{u} \subset B_{\eps^*}$. 
Then, $\bar{u} \in X_{\Omega}$ and, from Proposition \ref{menoinf},
$\bar{s} > 0$ exists so that 
\begin{equation}\label{mp1}
\|u^*\|_{\Omega,V} > \varrho\quad\mbox{ and }
\quad\J_{\Omega}(u^*) < \alpha^*
\end{equation}
with $u^*= \bar{s}\bar{u}$ and 
$\varrho$, $\alpha^*$ as in Proposition \ref{gerho}.\\
Clearly, we have that $\supp u^* \subset B_{\eps^*}$,
so $u^* \in X_{\Omega}$, and if we consider the segment joining $0$ to $u^*$,
namely
\begin{equation}\label{segment}
\gamma^*: s\in [0,1] \mapsto s u^* \in X,
\end{equation}
we obtain that not only $\gamma^*([0,1]) \subset X_\Omega$ but also
$\supp \gamma^*(s) \subset B_{\eps^*}$ for all $s \in [0,1]$.
\end{remark}

\begin{proposition}\label{exR}
Under the assumptions in Theorem \ref{ThExist}, 
functional $\J_{\Omega}$ has at least a critical point $u_{\Omega} \in X_{\Omega}$
such that
\begin{equation}\label{mp2}
\alpha^*\ \le\ \J_{\Omega}(u_{\Omega})\ \le \ \sup_{s\in [0,1]} \J(s u^*), 
\end{equation}
with $\alpha^*$ as in Proposition \ref{gerho} and $u^*$ as in Remark \ref{RemNew}. 
\end{proposition}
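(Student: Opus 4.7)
The plan is to apply the Mountain Pass Theorem (Theorem \ref{mountainpass}) directly to $\J_\Omega$ on the Banach space $X_\Omega$, pairing it with the weaker space $(W,\|\cdot\|_W):=(W_{0,V}^{1,p}(\Omega),\|\cdot\|_{\Omega,V})$. The continuous embedding $X_\Omega\hookrightarrow W_{0,V}^{1,p}(\Omega)$ is immediate from \eqref{Xlim}--\eqref{normB}; the fact that $\J_\Omega\in C^1(X_\Omega,\R)$ with $\J_\Omega(0)=0$ follows from \eqref{G0=0}, \eqref{funOm} and the discussion preceding \eqref{diffOm}; and the $(wCPS)$ condition in $\R$ is already in hand via Proposition \ref{wCPSbdd}. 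Hence only the geometric conditions of Theorem \ref{mountainpass} and the path-specific upper bound in \eqref{mp2} need to be addressed.

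For the geometry, I would transfer Propositions \ref{gerho} and \ref{menoinf} from $X$ to $X_\Omega$ through the trivial extension $u\mapsto\tilde{u}$: by \eqref{Xequ}, this extension preserves $\|\cdot\|_V$ and $|\cdot|_\infty$, and from \eqref{funOm} together with $\tilde{u}\equiv 0$ outside $\Omega$ one has $\J_\Omega(u)=\J(\tilde{u})$. Thus Proposition \ref{gerho} yields $\J_\Omega(u)\ge\alpha^*$ whenever $\|u\|_{\Omega,V}=\varrho$, while Remark \ref{RemNew} supplies the endpoint $e:=u^*\in X_\Omega$ satisfying $\|u^*\|_{\Omega,V}>\varrho$ and $\J_\Omega(u^*)<\alpha^*$ as in \eqref{mp1}. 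Theorem \ref{mountainpass} then produces a critical point $u_\Omega\in X_\Omega$ of $\J_\Omega$ with
\[
\J_\Omega(u_\Omega)=\inf_{\gamma\in\Gamma}\sup_{s\in[0,1]}\J_\Omega(\gamma(s))\ \ge\ \alpha^*,
\]
where $\Gamma=\{\gamma\in C([0,1],X_\Omega):\gamma(0)=0,\ \gamma(1)=u^*\}$, which is the left inequality of \eqref{mp2}.

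For the right inequality I would estimate the minimax value by the specific competitor $\gamma^*$ from \eqref{segment}. The point of Remark \ref{RemNew} is precisely that $\bar u$ (and hence $u^*$) was chosen with $\supp\bar u\subset B_{\eps^*}\subset\Omega$, so $\gamma^*([0,1])\subset X_\Omega$ and $\gamma^*\in\Gamma$; moreover, for every $s\in[0,1]$ the extension identification gives $\J_\Omega(\gamma^*(s))=\J(su^*)$, whence
\[
\J_\Omega(u_\Omega)\ \le\ \sup_{s\in[0,1]}\J_\Omega(\gamma^*(s))\ =\ \sup_{s\in[0,1]}\J(su^*).
\]
There is no serious obstacle here, since every analytic ingredient (compactness, geometry, passage between $\J$ and $\J_\Omega$) has been prepared in Sections \ref{mainsection} and \ref{sectionbdd}; the only item that demands care is the bookkeeping that $\gamma^*$ really lies in $X_\Omega$, which is guaranteed by the support restriction imposed in Remark \ref{RemNew}.
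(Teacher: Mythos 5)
Your proposal is correct and follows essentially the same route as the paper: verify $\J_\Omega(0)=0$, transfer the geometric conditions of Propositions \ref{gerho} and \ref{menoinf} to $X_\Omega$ via the trivial extension and \eqref{Xequ}, invoke Proposition \ref{wCPSbdd} for the $(wCPS)$ condition, apply Theorem \ref{mountainpass}, and bound the minimax level from above by the competitor path $\gamma^*$ of \eqref{segment}, which lies in $\Gamma_\Omega$ thanks to the support restriction in Remark \ref{RemNew}. No gaps to report.
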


\begin{proof}
From \eqref{G0=0} we have that $\J_\Omega(0)=0$. 
Moreover, from \eqref{Xequ}, \eqref{funOm} and Proposition \ref{gerho} 
we have that 
\[
u\in X_{\Omega},\quad \|u\|_{\Omega,V}= \varrho\qquad
\then\qquad \J_{\Omega}(u)\ge\alpha^*.
\]
Then, taking $u^*\in X_{\Omega}$ as in Remark \ref{RemNew}, 
from \eqref{mp1} and Proposition \ref{wCPSbdd} we have that Theorem \ref{mountainpass} applies to 
$\J_{\Omega}$ in the Banach space $X_{\Omega}$ and a critical point 
$u_{\Omega}\in X_{\Omega}$ exists such that
\[
\J_{\Omega}(u_{\Omega}) = \inf_{\gamma \in \Gamma_{\Omega}} \sup_{s\in [0,1]} \J_{\Omega}(\gamma(s)) \ge \alpha^*
\]
with $\Gamma_{\Omega} = \{ \gamma \in C([0,1],X_{\Omega}):\, \gamma(0) = 0,\; \gamma(1) = u^*\}$.\\
On the other hand, also the second inequality in \eqref{mp2} is true as, 
from Remark \ref{RemNew}, segment $\gamma^*$ in \eqref{segment} is such
that $\gamma^* \in \Gamma_{\Omega}$. 
\end{proof}

At last, let us point out that,
in the following, we need to prove a uniform boundedness 
for a sequence which is bounded in $W_0^{1,p}(\Omega)$.
Clearly, since $\Omega$ is a bounded subset of $\R^N$,
if $p > N$ from the Sobolev Embedding Theorem
such a boundedness is trivially satisfied
but, on the contrary, 
if $p \le N$ the following sufficient conditions are required. 

\begin{lemma}
\label{tecnico} 
Let $\Omega$ be an open bounded domain in $\R^N$ and
consider $p$, $q$ so that  $1 < p \le N$
and $p \le q < p^*$ (if $N = p$ we just require that $p^*$ is any
number larger than $q$)
and take $u \in W_0^{1,p}(\Omega)$. If $a^* >0$ and $m_0\in \N$
exist such that 
\begin{equation}
\label{LUuno}
\int_{\Omega^+_m}|\nabla u|^p dx \le a^* \left(m^q\ |\Omega^+_m| +
\int_{\Omega^+_m} |u|^q dx\right)\quad\hbox{for all $m \ge m_0$,}
\end{equation}
with $\Omega^+_m = \{x \in \Omega:\ u(x) > m\}$, then $\displaystyle \esssup_{\Omega} u$
is bounded from above by a positive constant which can be chosen so
that it depends only on $|\Omega^+_{m_0}|$, $N$, $p$, $q$, $a^*$, $m_0$, 
$\|u\|_{\Omega}$, or better by a positive constant which can be chosen so 
that it depends only on $N$, $p$, $q$, $a^*$, $m_0$ and $a_0^*$
for any $a_0^* > 0$ such that 
\[
\max\{|\Omega^+_{m_0}|,\ \|u\|_{\Omega}\}\ \le\ a_0^*.
\]
Vice versa, if inequality
\[
\int_{\Omega^-_m}|\nabla u|^p dx \le a^*\left(m^q\ |\Omega^-_m| +
\int_{\Omega^-_m} |u|^q dx\right) \quad\hbox{for all $m \ge m_0$,}
\]
holds, with $\Omega^-_m = \{x \in \Omega:\ u(x) < - m\}$, 
then $\displaystyle \esssup_{\Omega}(-u)$ 
is bounded from above by a positive constant which can be
chosen so that it depends only on $N$, $p$, $q$, $a^*$, $m_0$ and any
constant which is greater than both $|\Omega^-_{m_0}|$ and 
$\|u\|_{\Omega}$.
\end{lemma}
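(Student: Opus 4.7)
The plan is a classical Stampacchia--De Giorgi level set iteration applied to the truncations $v_m := (u-m)^+ \in W_0^{1,p}(\Omega)$, which satisfy $\nabla v_m = \nabla u \cdot \chi_{\Omega_m^+}$ and $\supp v_m \subset \overline{\Omega_m^+}$. The first step is to combine the Sobolev embedding $W_0^{1,p}(\Omega)\hookrightarrow L^{p^*}(\Omega)$ (as in \eqref{stima*}; when $p=N$ we just fix any exponent $p^*>q$ for which the embedding still holds) with hypothesis \eqref{LUuno}, obtaining for every $m\ge m_0$ that
\[
|v_m|_{\Omega,p^*}^p \ \le\ \sigma_*^p \int_{\Omega_m^+} |\nabla u|^p\,dx
\ \le\ \sigma_*^p\, a^*\!\left(m^q|\Omega_m^+| + \int_{\Omega_m^+} u^q\,dx\right).
\]

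Next I would estimate the right-hand side by splitting $u = v_m + m$ on $\Omega_m^+$ (so $u^q \le 2^{q-1}(v_m^q + m^q)$) and applying H\"older with $q/p^*<1$, which yields $\int_{\Omega_m^+} v_m^q\,dx \le |v_m|_{\Omega,p^*}^q\,|\Omega_m^+|^{1-q/p^*}$. Since $|v_m|_{\Omega,p^*}\le |u|_{\Omega,p^*}\le \sigma_*\|u\|_\Omega$ is \emph{a priori} controlled by $a_0^*$, the factor $|v_m|_{\Omega,p^*}^q$ rewrites as $|v_m|_{\Omega,p^*}^{q-p}\cdot |v_m|_{\Omega,p^*}^p$ with $|v_m|_{\Omega,p^*}^{q-p}$ bounded, so I can absorb a small multiple of $|v_m|_{\Omega,p^*}^p$ into the left-hand side provided $|\Omega_m^+|^{1-q/p^*}$ is small enough. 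By Chebyshev, $|\Omega_m^+|\le m^{-p^*}|u|_{\Omega,p^*}^{p^*}$, so there is a threshold $m^*=m^*(N,p,q,a^*,a_0^*)$ such that this absorption is legitimate for all $m\ge m^*$. Combining with the trivial inequality $(h-m)^{p^*}|\Omega_h^+|\le \int v_m^{p^*}\,dx$ (valid for any $h>m$), I then arrive at the Stampacchia-type estimate
\[
|\Omega_h^+|\ \le\ \frac{K\, m^{qp^*/p}}{(h-m)^{p^*}}\,|\Omega_m^+|^{p^*/p}
\qquad \hbox{for all } h>m\ge \max\{m_0,m^*\},
\]
with $K$ depending only on $N,p,q,a^*,a_0^*$.

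To finish, I would fix $k_0\ge \max\{m_0,m^*\}$ (enlarged once more from the same data if necessary) and define $\ell_n := k_0(2-2^{-n})$, so that $k_0\le \ell_n\le 2k_0$ and $\ell_{n+1}-\ell_n = k_0\cdot 2^{-n-1}$. Setting $a_n := |\Omega_{\ell_n}^+|$, the displayed inequality reduces to a geometric recursion of the form $a_{n+1}\le K'\,2^{p^* n}\,a_n^{p^*/p}$, with exponent $p^*/p>1$, to which the classical Stampacchia iteration lemma applies: $a_n\to 0$ provided the initial value $a_0 = |\Omega_{k_0}^+|$ meets an explicit smallness threshold. This is precisely where the \emph{strict} subcriticality $q<p^*$ is essential, since the Chebyshev decay $|\Omega_{k_0}^+|\le k_0^{-p^*}|u|_{\Omega,p^*}^{p^*}$ in $k_0$ is strictly faster than the threshold decay $k_0^{-p^*(q-p)/(p^*-p)}$ required by the iteration (observe $(q-p)/(p^*-p)<1$ because $q<p^*$), so enlarging $k_0$ by an amount depending only on the stated data settles the smallness condition. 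Then $a_n\to 0$ forces $|\Omega_{2k_0}^+|=0$, i.e.\ $\esssup_\Omega u\le 2k_0$; the twin statement for $\esssup_\Omega(-u)$ follows by applying the same argument to $-u$.

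The only delicate part, and hence the main obstacle, is the careful bookkeeping that keeps the final bound depending only on $N,p,q,a^*,m_0$ and on the single upper bound $a_0^*$ for both $\|u\|_\Omega$ and $|\Omega_{m_0}^+|$: every appearance of $|\Omega_m^+|$ and $|v_m|_{\Omega,p^*}$ in the estimates must be traced back to $a_0^*$ through \eqref{stima*} and Chebyshev's inequality, and this accounting is what underpins the sharper (second) form of the statement.
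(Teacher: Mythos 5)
Your argument is correct, and its first half is essentially the paper's: both start from the Sobolev inequality \eqref{stima*} applied to $(u-m)^+$, split $u=(u-m)+m$ on $\Omega^+_m$, use H\"older with exponent $q/p^*<1$, control $|\Omega^+_m|$ by Chebyshev through $|u|_{\Omega,p^*}\le\sigma_*\|u\|_\Omega$, and absorb the gradient term for $m$ beyond a threshold determined only by $N,p,q,a^*$ and $a_0^*$ (your $m^*$ plays the role of the paper's $m_1$ in \eqref{astar3}). Where you diverge is the iteration engine. The paper follows Ladyzhenskaya--Ural'tseva: after absorption it converts the estimate into $\int_{\Omega^+_m}(u-m)\,dx\le C\,m\,|\Omega^+_m|^{1+\eps/p}$ with $\eps=\frac{p}{N}-\frac{q-p}{p^*}>0$ and invokes \cite[Lemma II.5.1]{LU}, which returns the fully explicit bound \eqref{astar4}--\eqref{astar5}; this explicitness is precisely what makes the uniform dependence on $N,p,q,a^*,m_0,a_0^*$ transparent, which is the reason the proof is redone in the paper at all. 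You instead derive the superlevel-set decay $|\Omega^+_h|\le K m^{qp^*/p}(h-m)^{-p^*}|\Omega^+_m|^{p^*/p}$ and run the classical Stampacchia--De Giorgi geometric iteration on levels $\ell_n=k_0(2-2^{-n})$, settling the smallness requirement on $|\Omega^+_{k_0}|$ by comparing the Chebyshev decay $k_0^{-p^*}$ with the threshold decay $k_0^{-p^*(q-p)/(p^*-p)}$, where strict subcriticality $q<p^*$ gives the needed margin; this is self-contained (no appeal to \cite{LU}) and your final bound $2k_0$ depends, as required, only on $N,p,q,a^*,m_0,a_0^*$ (indeed you never even need $|\Omega^+_{m_0}|$, since Chebyshev controls all level-set measures from $\|u\|_\Omega$), at the price of a less explicit constant defined implicitly by the enlargement of $k_0$. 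Either route proves the lemma, including the second, sharper form and the statement for $-u$.
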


\begin{proof}
The proof is essentially as in \cite[Theorem  II.5.1]{LU} 
but, in order to point out some uniform estimates on the $L^\infty$--norms
of a sequence which is bounded in $W_0^{1,p}(\Omega)$, here 
we prefer to give some more details. \\
Firstly, taking any $m\in \N$ we note that direct
computations, the H\"older inequality and \eqref{stima*}
imply that
\begin{equation}\label{astar0}
\begin{split}
\int_{\Omega^+_m} (u-m)^q dx\ &\le 
|\Omega^+_m|^{1-\frac{q}{p^*}} \left(\int_{\Omega^+_m} (u-m)^{p^*} dx\right)^{\frac{q}{p^*}}\\
&\le \sigma_*^q |\Omega^+_m|^{1- \frac{q}{p^*}} \left(\int_{\Omega^+_m} |\nabla u|^{p} dx\right)^{\frac{q}{p}},
\end{split}
\end{equation}
with $\sigma_* > 0$ as in \eqref{stima*},
since $\max\{u-m,0\} \in W^{1,p}_0(\Omega)$. 
Thus, from \eqref{astar0} and direct computations it follows that 
\[
\begin{split}
\int_{\Omega^+_m} |u|^q dx \ &\le\ 
2^{q-1} \left(m^q |\Omega^+_m| + \int_{\Omega^+_m} (u-m)^q dx\right)\\
&\le 2^{q-1} \left(m^q |\Omega^+_m| + 
\sigma_*^q |\Omega^+_m|^{1- \frac{q}{p^*}} \left(\int_{\Omega^+_m} |\nabla u|^{p} dx\right)^{\frac{q}{p}}\right)\\
&\le\ 
2^{q-1} \left(m^q\ |\Omega^+_m| + 
\sigma_*^q |\Omega^+_{m}|^{1- \frac{q}{p^*}} \|u\|_{\Omega}^{q-p}
\int_{\Omega^+_m} |\nabla u|^{p} dx\right).
\end{split}
\]
Now, if $m \ge m_0$, from one hand \eqref{stima*} gives
\begin{equation}\label{astar1}
m^{p^*}\ |\Omega^+_m|\ \le\ \int_{\Omega^+_m} |u|^{p^*} dx\ \le\ \int_{\Omega} |u|^{p^*} dx \le L,
\end{equation}
with
\begin{equation}\label{astar11}
L^{\frac1{p^*}} = \sigma_* \|u\|_{\Omega},
\end{equation}
while, from the other hand, \eqref{LUuno} and \eqref{astar1},
\eqref{astar11} imply that
\begin{equation}\label{astar2}
\begin{split}
&\int_{\Omega^+_m}|\nabla u|^p dx \ \le\ a^*(1+2^{q-1}) m^q |\Omega^+_m|\\
&\qquad +
2^{q-1} a^* \sigma_*^q |\Omega^+_{m}|^{1-\frac{q}{p^*}} \|u\|_{\Omega}^{q-p} \int_{\Omega^+_m} |\nabla u|^{p} dx\\
&\quad \le\ a^*(1+2^{q-1}) m^q |\Omega^+_m| +
2^{q-1} a^* \sigma_*^q \frac{L^{1-\frac{q}{p^*}}}{m^{p^*-q}} \|u\|_{\Omega}^{q-p} \int_{\Omega^+_m} |\nabla u|^{p} dx\\
&\quad \le\ a^*(1+2^{q-1}) m^q |\Omega^+_m| +
2^{q-1} a^* \sigma_*^{p^*} \frac{\|u\|_{\Omega}^{p^*-p}}{m^{p^*-q}}\  \int_{\Omega^+_m} |\nabla u|^{p} dx.
\end{split}
\end{equation}
Hence, taking $m_1 \in \N$ such that
\begin{equation}\label{astar3}
m_1 \ge \max \left\{m_0, \left(2^{q} a^* \sigma_*^{p^*} \|u\|_{\Omega}^{p^*-p} \right)^{\frac{1}{p^*-q}}\right\},
\end{equation}
from \eqref{astar1}, \eqref{astar2} and direct computations it follows that
\[
\int_{\Omega^+_m}|\nabla u|^p dx \ \le\ a^*_1 m^q |\Omega^+_m|\
\le \ a^*_1 m^p\ |\Omega^+_m| \left(\frac{L}{|\Omega^+_m|}\right)^{\frac{q-p}{p^*}}
\quad\hbox{for all $m \ge m_1$,}
\]
with $a_1^* =2 a^*(1+2^{q-1})$, that is,
\[
\int_{\Omega^+_m}|\nabla u|^p dx \ \le\ a^*_1 L^{\frac{q-p}{p^*}}\ m^p\ |\Omega^+_m|^{1-\frac{p}{N}+\eps}
\quad\hbox{for all $m \ge m_1$,}
\]
where $\eps = \frac{p}{N} - \frac{q-p}{p^*} > 0$ as $q < p^*$.
Then, by using the same arguments required by estimate \eqref{astar0} but
with $q$ replaced by 1, for all $m \ge m_1$ this last inequality implies that
\[
\int_{\Omega^+_m} (u-m) dx \le \sigma_* |\Omega^+_m|^{1- \frac{1}{p^*}} \left(\int_{\Omega^+_m} |\nabla u|^{p} dx\right)^{\frac{1}{p}}
\le 
a^*_2 L^{\frac{q-p}{p p^*}}\ m\ |\Omega^+_m|^{1 + \frac{\eps}{p}}
\]
with $a_2^* = \sigma_* (a^*_1)^{\frac{1}{p}}$. Thus, reasoning as
in the proof of \cite[Lemma II.5.1]{LU} but with our setting,
from direct computations we have that 
\begin{equation}\label{astar4}
\esssup_{\Omega}u\ \le\ 2^{\frac{p}{\eps}}\ \left(m_1 + a_3^*
L^{\frac{q-p}{\eps p^*}} \int_{\Omega^+_{m_1}} |u| dx\right)
\end{equation}
with $a^*_3 = (a_2^*)^{1+\frac{p}{\eps}}$.
So, since from \eqref{astar3} and H\"older inequality 
we have that 
\[
\int_{\Omega^+_{m_1}} |u| dx \le \int_{\Omega^+_{m_0}} |u| dx \le 
|\Omega^+_{m_0}|^{1-\frac{1}{p}} 
\left(\int_{\Omega^+_{m_0}} |u|^p dx\right)^{\frac1p}
\le\ |\Omega^+_{m_0}|^{1-\frac{1}{p}} \|u\|_{\Omega},
\]
from \eqref{astar11} estimate \eqref{astar4} becomes
\begin{equation}\label{astar5}
\esssup_{\Omega}u\ \le\ 2^{\frac{p}{\eps}}\ \left(m_1 + a_3^*
\sigma_*^{\frac{q-p}{\eps}} |\Omega^+_{m_0}|^{1-\frac{1}{p}} \|u\|_{\Omega}^{1+\frac{q-p}{\eps}}\right).
\end{equation}
At last, if we take $a^*_0 \ge \max\{\|u\|_{\Omega},\ |\Omega^+_{m_0}|\}$,
estimate \eqref{astar5} implies
\[
\esssup_{\Omega}u\ \le\ 2^{\frac{p}{\eps}}\ \left(m_1^* + a_3^*
\sigma_*^{\frac{q-p}{\eps}} (a_0^*)^{2+\frac{q-p}{\eps}-\frac{1}{p}}\right)
\]
with $m_1^*$ defined as in \eqref{astar3} but replacing $\|u\|_{\Omega}$ with $a_0^*$.\\
Finally, the proof of the second statement of this lemma follows 
from the first part but applied to function $-u$.
\end{proof}


\section{Proof of the main theorem}  \label{finalsection}

Now, we are ready to prove our main result.
To this aim, we follow an approach which is similar
to that one in \cite{CPSpreprint} but, since our problem has a 
different setting and requires to rehash the proofs,
for completeness here we provide all the details.

Throughout this section, we suppose that the assumptions in Theorem \ref{ThExist} are satisfied
and for each $k\in\N$ we consider as bounded set the open ball $B_k$, its related
Banach space $X_{B_k}$ as in \eqref{Xlim} and the functional 
\[
\J_k : u \in X_{B_k} \mapsto \J_k(u) = \J_{B_k}(u) \in \R
\]
with $\J_{B_k}(u)$ defined as in \eqref{funOm}.

\begin{remark}\label{rem00}
For the sake of convenience, if $u \in X_{B_k}$ we always consider its trivial
extension as $0$ in $\R^N \setminus B_k$. Then, if we still denote such an 
extension by $u$, we have that $u \in X$, too. Moreover, 
from \eqref{G0=0}, definitions \eqref{funct} and \eqref{funOm}, 
respectively \eqref{diff} and \eqref{diffOm}, imply that
$\J_k(u) = \J(u)$, respectively
\[
\langle d\J_k(u),v\rangle = \langle d\J(u),v\rangle
\quad \hbox{for all $v \in X_{B_k}$.}
\]
\end{remark}

\begin{remark}\label{RemNew1}
If in Remark \ref{RemNew} we take $\eps^* \le 1$, then $u^* \in X_{B_1}$ and   
segment $\gamma^*$ in \eqref{segment}
is such that $\supp \gamma^*(s) \subset B_1$ for all $s \in [0,1]$. 
Hence, for all $k\in \N$
we have that $\gamma^* \in \Gamma_{B_k}$, with 
\[
\Gamma_{B_k} = \{ \gamma \in C([0,1],X_{B_k}):\, \gamma(0) = 0,\; \gamma(1) = u^*\}.
\]
Moreover, for the continuity of $\J\circ\gamma^*: s \in [0,1] \mapsto \J(s u^*) \in \R$, 
$\alpha^{**} \in \R$ exists, independent of $k$, such that
\[
\alpha^{**} = \max_{s\in [0,1]} \J(s u^*). 
\]
\end{remark}

Since for all $k \in \N$ Proposition \ref{exR}
applies to $\J_k$ in $X_{B_k}$, from Remarks \ref{rem00} and \ref{RemNew1},
a sequence $(u_k)_k \subset X$ exists such that 
for every $k \in \N$ it results:
\begin{itemize}
\item[$(i)$] $\ u_k \in X_{B_k}$ with $u_k = 0$ in $\R^N \setminus B_k$,
\item[$(ii)$] $\ \displaystyle \alpha^*\ \le\ \J(u_k)\ \le \ \alpha^{**}$,
\item[$(iii)$] $\langle d\J(u_k),v\rangle = 0$ for all $v \in X_{B_k}$,
\end{itemize}
with $\alpha^*$ as in Proposition \ref{gerho} and $\alpha^{**}$ as in Remark \ref{RemNew1}.

The proof of Theorem \ref{ThExist} requires different steps.

Firstly, we prove that sequence $(u_k)_k$
is bounded in $X$. If $p > N$, from the Sobolev Embedding Theorem 
it is enough to verify that $(\|u_k\|)_k$ is bounded
while, if $p \le N$, such a boundedness is not enough
and Lemma \ref{tecnico} needs.

From now on, we denote any strictly positive constant independent 
of $k$ by $d_i$.

\begin{proposition} \label{boundX}
A constant $M_0 > 0$ exists such that
\begin{equation}\label{bddX}
\|u_k\|_{X}\le M_0\quad\mbox{ for all } k\in\N.
\end{equation}
Moreover, for every $r \ge p$ we have also that
\begin{equation}\label{bdd2}
|u_k|_{V,r}\le M_0 \quad\mbox{ for all } k\in\N.
\end{equation}
\end{proposition}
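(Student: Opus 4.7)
The plan is to split the proof into three steps: (a) the bound on $\|u_k\|_V$, (b) the bound on $|u_k|_\infty$, and (c) the deduction of \eqref{bdd2}.

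For step (a), I intend to reuse the computation of Proposition \ref{convRN}. Since property $(iii)$ gives $\langle d\J(u_k), u_k\rangle = 0$ (because $u_k \in X_{B_k}$ itself is an admissible test direction), the linear combination $\mu\J(u_k) - \langle d\J(u_k), u_k\rangle$ collapses to $\mu\J(u_k)$, and $(h_2)$, $(h_3)$, $(g_3)$ together with the upper bound $\J(u_k)\le \alpha^{**}$ from property $(ii)$ should produce
\[
\mu\alpha^{**} \ge c_1 \|u_k\|_V^p
\]
for some $c_1 > 0$ independent of $k$, which closes step (a).

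For step (b) I separate the regimes $p > N$ and $p \le N$. The case $p > N$ is immediate from the Sobolev embedding $W_0^{1,p}(B_k)\hookrightarrow L^\infty(B_k)$ together with (a). For $p \le N$ I plan to apply Lemma \ref{tecnico}. Fixing $m \ge 1$, the truncation $v = (u_k - m)^+$ belongs to $X_{B_k}$, hence testing \eqref{diff} with $v$ yields
\[
\int_{B_k^+(m)}\!\!\Bigl[A(x,u_k) + \tfrac{1}{p}A_t(x,u_k)(u_k - m)\Bigr]|\nabla u_k|^p dx + \int_{B_k^+(m)}\!\!V(x)|u_k|^{p-2} u_k(u_k - m) dx = \int_{B_k^+(m)}\!\!g(x,u_k)(u_k - m) dx,
\]
where $B_k^+(m) = \{x \in B_k: u_k(x) > m\}$. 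The key observation is that on $B_k^+(m)$ one has $0 < (u_k - m)/u_k < 1$, so that $A_t(x,u_k)(u_k - m)$ is sandwiched between $0$ and $A_t(x,u_k)u_k$; combining this with $(h_2)$ and $(h_4)$ via a short case analysis on the sign of $A_t(x,u_k)u_k$ should give a uniform ellipticity estimate
\[
A(x,u_k) + \tfrac{1}{p}A_t(x,u_k)(u_k - m) \ge c_2 > 0 \quad\mbox{a.e.\ on } B_k^+(m),
\]
with $c_2$ independent of $k$ and $m$. The $V$--term is nonnegative and can be dropped, while $(g_1)$ and $u_k > m \ge 1$ reduce the right-hand side to $(a_1 + a_2)\int_{B_k^+(m)} |u_k|^q dx$, so the identity collapses to the inequality \eqref{LUuno} with constant $a^*$ independent of $k$. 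Step (a) plus Chebyshev's inequality provides uniform control of $\|u_k\|_{B_k}$ and of $|B_k^+(1)|$, so Lemma \ref{tecnico} with $m_0 = 1$ produces an upper bound on $\esssup_{B_k} u_k$ independent of $k$. The symmetric argument with $-(u_k + m)^-$ on $B_k^-(m) = \{u_k < -m\}$ and the second part of Lemma \ref{tecnico} controls $\esssup_{B_k}(-u_k)$, and combining these with (a) yields \eqref{bddX}.

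The main obstacle is precisely the uniform ellipticity estimate on $B_k^+(m)$: $(h_4)$ directly controls only $pA(x,u_k) + A_t(x,u_k)u_k$, whereas the truncation inserts $A_t(x,u_k)(u_k - m)$ instead; the decomposition $A_t(x,u_k)(u_k - m) = A_t(x,u_k)u_k(1 - m/u_k)$ with $0 < 1 - m/u_k < 1$ is what converts $(h_4)$ into a pointwise bound of the form $pA(x,u_k) + A_t(x,u_k)(u_k - m) \ge \alpha_0\min\{p,\alpha_2\}$, independent of $m$ and $k$, and the analogous identity with $(u_k + m)$ takes care of $B_k^-(m)$. Once \eqref{bddX} is established, \eqref{bdd2} follows immediately from Lemma \ref{lemmaX}, since $|u_k|_{V,r} \le \|u_k\|_X \le M_0$ for every $r \ge p$.
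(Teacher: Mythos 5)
Your proposal is correct and follows essentially the same route as the paper: the bound on $\|u_k\|_V$ via $\mu\J(u_k)-\langle d\J(u_k),u_k\rangle$ with $(h_2)$, $(h_3)$, $(g_3)$ and property $(ii)$, then testing with the truncations $(u_k-m)^{\pm}$ and using $(h_2)$, $(h_4)$, $(g_1)$ to verify the hypothesis of Lemma \ref{tecnico} (the paper reaches the same ellipticity constant $\alpha_0\alpha_2/p$ by writing $A+\tfrac1p A_t(u_k-m)=(1-\tfrac{m}{u_k})(A+\tfrac1p A_t u_k)+\tfrac{m}{u_k}A$, which is equivalent to your sign case analysis, and uses Young's inequality where you simply invoke $u_k>m\ge 1$), and finally \eqref{bdd2} from Lemma \ref{lemmaX}.
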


\begin{proof}
From conditions $(i)$ and $(iii)$ we infer that
\begin{equation}\label{p0}
\left\langle d\J(u_k), u_k\right\rangle =0 \quad\mbox{ for all } k\in\N,
\end{equation}
then, taking $\mu$ as in assumption $(h_3)$, from \eqref{p0}, 
definitions \eqref{funct}, \eqref{diff}, assumptions $(h_2)$--$(h_3)$, 
$(V_1)$, $(g_3)$, \eqref{weightnorm} and direct computations we have that
\[
\begin{split}
\mu\J(u_k)\ =\  &\mu\J(u_k)-\left\langle d\J(u_k), u_k\right\rangle \\
\ge\  &\frac{\alpha_0\alpha_1}{p}\int_{\R^N}|\nabla u_k|^p dx +
\left(\frac{\mu}{p}-1\right)\int_{\R^N}V(x) |u_k|^p dx\ \ge\ d_1 \|u_k\|_{V}^p
\end{split}
\]
for a suitable constant $d_1>0$.
Whence, condition $(ii)$ implies that
\begin{equation}\label{M1}
\|u_k\|_{V}\le d_2 \quad\mbox{ for all } k\in\N
\end{equation}
with $d_2 = \left(\frac{\mu \alpha^{**}}{d_1}\right)^{\frac1p}$.\\
Now, in order to obtain estimate \eqref{bddX} if $p \le N$, from
definition \eqref{Xdefn} 
we need to prove that $(u_k)_k$ is a bounded sequence in $L^{\infty}(\R^N)$, too.
To this aim, we note that
for a fixed $k \in \N$ either $|u_k|_\infty \le 1$ or $\ |u_k|_\infty > 1$.\\
If $\ |u_k|_\infty > 1$, then 
\[
\esssup_{\R^N} u_k > 1\quad \hbox{and/or} \quad
\esssup_{\R^N} (-u_k) > 1.
\]
Assume that $\displaystyle \esssup_{\R^N} u_k > 1$
and consider the set
\[
B^{+}_{k,1} = \{x \in \R^N:\ u_k(x) > 1\}.
\]
From condition $(i)$ we have that
\[
B^{+}_{k,1} \subset \ B_k,
\]
moreover, $B^{+}_{k,1}$ is an open bounded domain
such that not only $|B^{+}_{k,1}| > 0$ but also, by means of \eqref{WVem}, it is
\[
|B^{+}_{k,1}| \le \int_{B^{+}_{k,1}} |u_k|^p dx
\le \int_{\R^N} |u_k|^p dx \le \|u_k\|^p\le d_3 \|u_k\|_V^p
\]
for a suitable $d_3>0$.
Hence, estimate \eqref{M1} gives 
\begin{equation}\label{measbdd}
|B^{+}_{k,1}|\le d_4
\end{equation}
with $d_4 = d_2^p d_3$.\\
In order to prove that Lemma \ref{tecnico} applies, for any $m \in \N$ we consider 
the new function $R^+_m : \R \to \R$ defined as
\[
t \mapsto R^+_mt = \left\{\begin{array}{ll}
0&\hbox{if $t \le m$}\\
t-m&\hbox{if $t > m$}
\end{array}\right. .
\]
Since taking $t=0$ in $(h_4)$
we obtain $\alpha_2 \le p$, for any $m \ge 1$, 
we have that $R^+_m u_k \in X_k$ and from condition $(iii)$, 
definition \eqref{diffOm} and hypotheses $(V_1)$, $(h_2)$, $(h_4)$ and 
direct computations it follows that
\[
\begin{split}
0\ =\ &\langle d\J(u_k), R^+_m u_k\rangle
=\int_{B^+_{k,m}} \left(1-\frac{m}{u_k}\right)\left(A(x, u_k)
+\frac1p A_t(x, u_k) u_k \right) |\nabla u_k|^p dx\\
&+\int_{B^+_{k,m}} \frac{m}{u_k} A(x, u_k)|\nabla u_k|^p dx
+\int_{B^+_{k,m}} \left(1-\frac{m}{u_k}\right) V(x) |u_k|^p dx-\int_{\R^N} g(x, u_k) R^+_m u_k dx\\
\ge\ 
&\frac{\alpha_0\alpha_2}{p}\int_{B^+_{k,m}} |\nabla u_k|^p dx -\int_{\R^N} g(x, u_k) R^+_m u_k dx,
\end{split}
\]
i.e.,
\begin{equation}\label{ie}
\frac{\alpha_0\alpha_2}{p}\int_{B^+_{k,m}} |\nabla u_k|^p dx
\le \int_{\R^N} g(x, u_k)R^+_m u_k dx,
\end{equation}
with $B^+_{k,m} = \{x\in\R^N: u_k(x)>m\}$. 
Clearly, it is $B^+_{k,m}\subseteq B^+_{k,1}$ so from
\eqref{measbdd} we have that
\begin{equation}   \label{OmM}
|B^+_{k,m}|\le d_4.
\end{equation}
Since $(g_3)$ implies $g(x,t) > 0$ if $t>0$ for a.e. $x \in \R^N$, then
$g(x,u_k(x)) > 0$ for a.e. $x \in B_{k,m}^{+}$; so,
from $(g_1)$, \eqref{p<q} and the Young inequality with 
conjugate exponents $\frac{q}p > 1$ and $\frac{q}{q-p}$, it follows that
\[
\begin{split}
\int_{\R^N} g(x,u_k)R^+_m u_k dx &\le 
\int_{B^{+}_{k,m}} g(x,u_k)u_k dx
\le \int_{B^{+}_{k,m}} (a_1 u_k^p + a_2 u_k^q) dx\\
&\le \int_{B^{+}_{k,m}} \left(\frac{q-p}q\ a_1^{\frac{q}{q-p}} + \frac{p}{q}\ u_k^q\right) dx 
+ a_2 \int_{B^{+}_{k,m}} u_k^q dx\\
&= \frac{q-p}q\ a_1^{\frac{q}{q-p}} |B^{+}_{k,m}|
+ \left(\frac{p}{q} + a_2\right) \int_{B^{+}_{k,m}} u_k^q dx.
\end{split}
\]
Hence, from \eqref{ie} we obtain that
\[
\int_{B^{+}_{k,m}} |\nabla u_k|^p dx \le  
a^* \left(|B^{+}_{k,m}|
+ \int_{B^{+}_{k,m}} u_k^q dx\right) \quad \hbox{for all $m \ge 1$,}
\]
with $a^*> 0$ independent of $m$ and $k$. Then, Lemma \ref{tecnico} with $\Omega=B_{k}$ 
applies and $d_5 > 1$ exists such that
\[
\esssup_{B_{k}} u_k \le d_5
\]
where, from Lemma \ref{tecnico}, \eqref{WVem}, \eqref{M1} and \eqref{OmM}
imply that constant $d_5$ can be choosen independent of $k \in \N$. \\
Similar arguments apply if $\displaystyle \esssup_{\R^N} (-u_k) > 1$,
then, summing up, $d_6 \ge 1$ exists such that
\[
|u_k|_{\infty} \le d_6\qquad \hbox{for all $k \in \N$.}
\]
So, \eqref{bddX} holds and from estimate \eqref{Lwin}
also \eqref{bdd2} is satisfied.
\end{proof}

From estimate \eqref{bddX} and \eqref{comp} it follows that $u_\infty \in W_V^{1, p}(\R^N)$
exists such that
\begin{align}\label{weakV}
&u_k\rightharpoonup u_\infty \quad\mbox{weakly in } W_V^{1,p}(\R^N),\\ \label{strong}
&u_k \to u_\infty \quad \hbox{strongly in $L^r(\R^N)$ for any $p\le r<p^*$,}\\ \label{limqo}
&u_k \to u_\infty \quad \hbox{a.e. in $\R^N$.}
\end{align}

\begin{proposition} \label{uinfty}
$u_\infty \in L^\infty(\R^N)$; hence, $u_\infty \in X$.
\end{proposition}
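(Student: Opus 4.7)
The plan is to combine the uniform $L^\infty$ bound from Proposition~\ref{boundX} with the pointwise a.e.\ convergence in \eqref{limqo}. Specifically, \eqref{bddX} together with definition \eqref{Xdefn} gives $|u_k|_\infty \le M_0$ for every $k \in \N$, i.e.\ for each $k$ there exists a null set $N_k \subset \R^N$ such that $|u_k(x)| \le M_0$ for all $x \in \R^N \setminus N_k$. Let $N_0$ be the null set outside of which \eqref{limqo} holds.

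Set $N := N_0 \cup \bigcup_{k \in \N} N_k$. Since a countable union of null sets is null, $|N| = 0$. For every $x \in \R^N \setminus N$ one has $|u_k(x)| \le M_0$ for all $k$ and $u_k(x) \to u_\infty(x)$, hence
\[
|u_\infty(x)| = \lim_{k \to +\infty} |u_k(x)| \le M_0.
\]
Passing to the essential supremum gives $|u_\infty|_\infty \le M_0$, so $u_\infty \in L^\infty(\R^N)$. Combined with $u_\infty \in W_V^{1,p}(\R^N)$ from \eqref{weakV}, definition \eqref{Xdefn} yields $u_\infty \in X$, which is the claim.

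There is no real obstacle here: the proposition is simply the standard observation that a uniform $L^\infty$ bound survives passage to the a.e.\ limit, once one has both \eqref{bddX} and \eqref{limqo} in hand. The only thing to be careful about is the measure-theoretic bookkeeping of the exceptional sets $N_k$, which is handled by taking their countable union.
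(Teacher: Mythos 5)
Your proof is correct and follows essentially the same route as the paper: both combine the uniform bound \eqref{bddX} with the a.e.\ convergence \eqref{limqo} to bound $u_\infty$ pointwise off a null set (the paper gets $1+M_0$ via a triangle inequality at a single index, you get $M_0$ by passing to the limit, and your explicit countable union of the exceptional sets $N_k$ is just a more careful write-up of the same bookkeeping). Nothing further is needed.
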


\begin{proof}
Taking
\[
A=\{x\in\R^N :\ \lim_{k\to+\infty} u_k(x) = u_\infty(x)\},
\]
for all $x\in A$ an integer $\nu_{x,1}\in\N$ exists such that
\[
|u_k(x) - u_\infty(x)| < 1\quad\mbox{ for all } k\ge\nu_{x,1},
\]
then from \eqref{bddX} we have that
\[
|u_\infty(x)| \le |u_\infty(x)- u_{\nu_{x,1}}(x)| +|u_{\nu_{x,1}}(x)| \le 1+ M_0.
\]
On the other hand, from \eqref{limqo} it follows 
that $|\R^N\setminus A| =0$, which completes the proof.
\end{proof}

\begin{remark}
Let $R \ge 1$ be fixed.
Actually, since $B_R$ is a bounded domain, from 
Remark \ref{Rmbdd} and \eqref{strong} it follows that
\begin{equation}	\label{strongpV}
u_{k} \to u_\infty \quad \hbox{strongly in $L_V^p(B_R)$.}
\end{equation}
\end{remark}

\begin{corollary}\label{limtau}
For all $1 \le r < +\infty$ we have that
\begin{equation}\label{strongtau}
u_k \to u_\infty \quad \hbox{strongly in $L_V^r(B_R)$}\quad\mbox{ for all } R\ge 1.
\end{equation}
\end{corollary}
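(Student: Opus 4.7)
The plan is to deduce the corollary by interpolating the already-established $L_V^p(B_R)$ convergence \eqref{strongpV} with the uniform $L^\infty$ bounds coming from Propositions \ref{boundX} and \ref{uinfty}. I would split the proof into the two natural cases $r\ge p$ and $1\le r<p$.

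For the easy case $r\ge p$, the argument is essentially that of Lemma \ref{immergo2}. Since $|u_k|_\infty\le M_0$ by \eqref{bddX} and $|u_\infty|_\infty\le 1+M_0$ by the proof of Proposition \ref{uinfty}, we can estimate pointwise
\[
V(x)|u_k-u_\infty|^r \le |u_k-u_\infty|_\infty^{r-p}\, V(x)|u_k-u_\infty|^p \le (2M_0+1)^{r-p}\, V(x)|u_k-u_\infty|^p,
\]
and integrating over $B_R$ and invoking \eqref{strongpV} gives convergence to zero.

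For the harder case $1\le r<p$, the trivial pointwise bound is no longer available, so I would use Hölder's inequality with respect to the weighted measure $V(x)\,dx$ on the bounded set $B_R$. Writing $V|u_k-u_\infty|^r=V^{1-r/p}\cdot(V^{1/p}|u_k-u_\infty|)^r$ and applying Hölder with conjugate exponents $p/(p-r)$ and $p/r$ yields
\[
\int_{B_R}V(x)|u_k-u_\infty|^r dx \le \left(\int_{B_R}V(x)dx\right)^{\!\frac{p-r}{p}}\left(\int_{B_R}V(x)|u_k-u_\infty|^p dx\right)^{\!\frac{r}{p}}.
\]
The first factor is finite since $(V_3)$ guarantees $V\in L^\infty(B_R)$ and $|B_R|<+\infty$, while the second factor tends to zero by \eqref{strongpV}. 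Combining both cases yields \eqref{strongtau} for every $1\le r<+\infty$ and every $R\ge 1$.

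No serious obstacle is expected: the delicate compactness work has already been done in establishing \eqref{comp}, \eqref{strong}, \eqref{strongpV}, and the uniform $L^\infty$ bounds. The only point to be careful about is that on the whole space $\R^N$ one cannot afford the $V^{1-r/p}$ factor in $L^1$ when $r<p$, which is precisely why we restrict to $B_R$ and invoke $(V_3)$.
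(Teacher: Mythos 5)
Your proof is correct and follows essentially the same route as the paper: for $r\ge p$ you reproduce the interpolation argument of Lemma \ref{immergo2} on $B_R$, using the uniform $L^\infty$ bounds from Propositions \ref{boundX} and \ref{uinfty} together with \eqref{strongpV}, exactly as the paper does by invoking that lemma. For $1\le r<p$ the paper instead bounds $V$ by $C_R$ via $(V_3)$ and uses $L^p(B_R)\hookrightarrow L^r(B_R)$ with \eqref{strong}, which is the same H\"older-on-a-bounded-ball argument you carry out with respect to the weighted measure $V(x)\,dx$, so there is no gap.
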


\begin{proof}
Taking $1 \le r \le p$, the boundedness of $B_R$ gives 
$L^p(B_R) \hookrightarrow L^{r}(B_R)$ which, together with \eqref{LVnorm}, 
assumption $(V_3)$ and \eqref{strong}, ensures that \eqref{strongtau} holds.\\
On the other hand, if we take $r >p$, then Propositions \ref{boundX} and \ref{uinfty}, together with \eqref{strongpV},
allow us to apply Lemma \ref{immergo2}, so 
\eqref{strongtau} holds, too.
\end{proof}

\begin{proposition}\label{limwpr}
We have that
\begin{equation}\label{strongwpr}
u_k \to u_\infty \quad \hbox{strongly in $W_V^{1,p}(B_R)\quad$ for all $R \ge 1$.}
\end{equation}
\end{proposition}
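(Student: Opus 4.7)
The plan is to reduce the problem to strong $L^p$--convergence of the gradients, since Corollary \ref{limtau} already provides $u_k \to u_\infty$ in $L^p_V(B_R)$. Fix $R \ge 1$ and a cutoff $\chi \in C_c^\infty(\R^N)$ with $\chi \equiv 1$ on $B_R$ and $\supp \chi \subset B_{R+1}$. For $k$ sufficiently large, the uniform $L^\infty$--bounds of Propositions \ref{boundX} and \ref{uinfty} make $w_k := \chi(u_k - u_\infty)$ an admissible test function in $X_{B_k}$. Property $(iii)$ then yields $\langle d\J(u_k), w_k\rangle = 0$; expanding via \eqref{diff} and isolating the principal part produces the identity
\[
\int \chi A(x,u_k)|\nabla u_k|^{p-2}\nabla u_k \cdot (\nabla u_k - \nabla u_\infty)\, dx + \frac{1}{p}\int A_t(x,u_k)\chi(u_k - u_\infty)|\nabla u_k|^p\, dx = R_k,
\]
where $R_k$ collects the $\nabla\chi$, $V$ and $g$ contributions. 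All three vanish as $k\to\infty$ by routine arguments: H\"older's inequality with the $L^{p'}$--bound on $|\nabla u_k|^{p-1}$ (from Proposition \ref{boundX}) and the strong convergence \eqref{strong} for the first; Corollary \ref{limtau} together with $(V_3)$ for the second; $(g_1)$, the $L^\infty$--bound and \eqref{strong} with $q < p^*$ for the third.

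The main obstacle is the $A_t$ term, since $|\nabla u_k|^p$ is only bounded in $L^1(B_{R+1})$ with no a priori equi-integrability, so that the pointwise vanishing of $A_t(x,u_k)\chi(u_k - u_\infty)$ (via \eqref{limqo} and $(h_1)$) cannot be transferred to the integral directly. Following the approach of \cite{CPSpreprint}, I would circumvent this by replacing $w_k$ with the truncated test function $\chi T_\delta(u_k - u_\infty)$ (with $T_\delta(s) = \max\{-\delta,\min\{\delta,s\}\}$, $\delta > 0$): the analogous $A_t$--contribution is then bounded by $C\delta$ uniformly in $k$, the other remainders still tend to zero, and the principal part localizes to the set $\{|u_k - u_\infty| < \delta\}$. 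Adding and subtracting the term with $|\nabla u_\infty|^{p-2}\nabla u_\infty$—which vanishes in the limit by dominated convergence combined with \eqref{weakV} and \eqref{limqo}—and using the pointwise lower bound $A(x,u_k)\ge \alpha_0$ from $(h_2)$ together with the Simon-type monotonicity inequalities derivable from Lemma \ref{lemmavett} (with an extra H\"older step on $|\nabla u_k| + |\nabla u_\infty|$ when $1 < p < 2$), one concludes
\[
\limsup_{k\to\infty}\int_{\{|u_k - u_\infty|<\delta\}\cap B_R}|\nabla u_k - \nabla u_\infty|^p\, dx \le C\delta.
\]

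Since $|\{|u_k - u_\infty|\ge \delta\}\cap B_R| \to 0$ as $k\to\infty$ by Chebyshev's inequality applied to \eqref{strong}, one obtains convergence in measure of $\nabla u_k$ to $\nabla u_\infty$ on $B_R$. Combining this with the extraction of an a.e.\ convergent subsequence of the gradients (in the spirit of Boccardo--Murat) and with the norm convergence $\int_{B_R} A(x,u_k)|\nabla u_k|^p\, dx \to \int_{B_R} A(x,u_\infty)|\nabla u_\infty|^p\, dx$—which can be established by testing the equation with $\chi u_k$ and passing to the limit using all the convergences above—strong $L^p$--convergence of $\nabla u_k$ to $\nabla u_\infty$ on $B_R$ follows, after letting $\delta\to 0$, and \eqref{strongwpr} is proved.
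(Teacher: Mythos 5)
Your first stage is sound and close in spirit to what the paper does: the paper also tests with a cutoff times a function of $v_k=u_k-u_\infty$ to neutralize the $A_t$--term, and your truncation $\chi T_\delta(v_k)$ does give $\limsup_k\int_{\{|v_k|<\delta\}\cap B_R}|\nabla v_k|^p\,dx\le C\delta$ and hence convergence in measure (and a.e., up to subsequences) of the gradients. But the last step is where the argument breaks. To upgrade to strong $L^p$--convergence you invoke the energy convergence $\int_{B_R}A(x,u_k)|\nabla u_k|^p\,dx\to\int_{B_R}A(x,u_\infty)|\nabla u_\infty|^p\,dx$, ``established by testing the equation with $\chi u_k$ and passing to the limit using all the convergences above''. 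This does not go through: in the identity $\langle d\J(u_k),\chi u_k\rangle=0$ the terms $\int\chi A(x,u_k)|\nabla u_k|^p\,dx$ and $\frac1p\int A_t(x,u_k)\chi u_k|\nabla u_k|^p\,dx$ involve $|\nabla u_k|^p$, which is only bounded in $L^1(B_{R+1})$; a.e.\ (or in measure) convergence of $\nabla u_k$ plus $L^p$--boundedness only yields a Fatou inequality, and passing to the limit in these integrals requires exactly the equi--integrability/strong convergence you are trying to prove. Moreover, identifying the limit of the left--hand side with the corresponding quantity for $u_\infty$ would require the limit equation $\langle d\J(u_\infty),\chi u_\infty\rangle=0$, i.e.\ Proposition \ref{critt}, which in the paper is proved \emph{after} and \emph{by means of} Proposition \ref{limwpr}; so as written the closing step is circular. (A minor further point: the coercive ``Simon--type'' lower bounds you need are not consequences of Lemma \ref{lemmavett}, which only provides the upper Lipschitz--type estimates, though such lower bounds are of course standard.)

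The paper avoids this obstruction by choosing, instead of $T_\delta$, the Boccardo--Murat--Puel test function $w_{R,k}=\chi_R\,\psi(v_k)$ with $\psi(t)=t\,\e^{\eta t^2}$ and $\eta$ large: the $A_t$--contribution is then dominated by $\frac{d_2}{\alpha_0}\int\chi_R A(x,u_k)|\psi(v_k)||\nabla u_k|^p\,dx$ and absorbed into the principal part, producing the coefficient $h_k=\psi'(v_k)-\frac{d_2}{p\alpha_0}|\psi(v_k)|>\frac12$; no $\delta$--error is left, so one obtains directly $\int_{B_{R+1}}\bigl(|\nabla u_k|^{p-2}\nabla u_k-|\nabla u_\infty|^{p-2}\nabla u_\infty\bigr)\cdot\nabla v_k\,dx\to 0$, and strong gradient convergence follows from monotonicity and \eqref{weakV}, with no need for convergence in measure, a.e.\ gradient convergence, or any energy--convergence step. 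If you want to keep your truncation route, you must supply a genuine proof of the energy convergence (or of equi--integrability of $|\nabla u_k|^p$ on $B_R$) that does not presuppose the limit equation; the simplest repair is to replace $T_\delta$ by the exponential $\psi$ as above.
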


\begin{proof}
For simplicity, throughout this proof we denote  
any infinitesimal sequence by $(\eps_k)_k$.\\
Fixing any $R \ge 1$, from \eqref{strongpV} it is enough to prove that
\begin{equation}\label{strong2}
|\nabla u_{k} - \nabla u_\infty|_{B_R,p} \to 0 \quad \hbox{as} \ k\to+\infty.
\end{equation}
To this aim, following an idea introduced in \cite{BMP}, 
let us consider the real map $\psi(t) = t \e^{\eta t^2}$, 
where $\eta > (\frac{\beta_2}{2\beta_1})^2$ will be fixed once
$\beta_1$, $\beta_2 > 0$ are chosen in a suitable way later on. By
definition, we have that
\begin{equation}\label{eq4}
\beta_1 \psi'(t) - \beta_2 |\psi(t)| > \frac{\beta_1} 2\qquad \hbox{for all $t \in \R$.}
\end{equation}
Defining $v_{k}=u_k - u_\infty$, limit \eqref{weakV} implies that
\begin{equation}\label{cc2}
v_{k} \rightharpoonup 0\, \hbox{weakly in $W^{1,p}_V(\R^N)$,}
\end{equation}
while from \eqref{strong}, respectively \eqref{limqo},
it follows that
\begin{equation}\label{cc21}
v_{k} \to 0 \quad \hbox{strongly in $L^p(\R^N)$,} 
\end{equation}
respectively
\begin{equation}\label{cc22}
v_{k} \to 0 \quad\hbox{a.e. in $\R^N$.}
\end{equation}
Moreover, from Proposition \ref{uinfty} and \eqref{bddX} we obtain that
\begin{equation}\label{cc23}
v_k \in X\quad \hbox{and}\quad |v_{k}|_\infty \le \bar{M}_0 \quad\hbox{for all $k \in\N$,}
\end{equation}
with $\bar{M}_0 = M_0 + |u_\infty|_\infty$.
Now, let $\chi_R \in C^{\infty}(\R^N)$ be a cut--off function such that
\begin{equation}\label{cut1}
\chi_R(x) = \left\{\begin{array}{ll}
1 &\hbox{if $|x| \le R$}\\
0 &\hbox{if $|x| \ge R + 1$}
\end{array}\right.,\qquad
\hbox{with}\quad 0 \le \chi_R(x) \le 1\quad \hbox{for all $x \in \R^N$,}
\end{equation}
and
\begin{equation}\label{cut2}
|\nabla \chi_R(x)| \le 2\quad \hbox{for all $x \in \R^N$.}
\end{equation}
Thus, for every $k \in \N$ we consider the new function
\[
w_{R,k}: x \in \R^N \mapsto w_{R,k}(x) = \chi_R(x) \psi(v_k(x)) \in \R.
\]
We note that \eqref{cc23} gives
\begin{equation}\label{stim1}
|\psi(v_{k})| \le \psi(\bar{M}_0),\quad 0<\psi'(v_{k}) \le \psi'(\bar{M}_0) \qquad\hbox{a.e. in $\R^N$,}
\end{equation}
while \eqref{cc22} implies
\begin{equation}\label{stim2}
\psi(v_{k}) \to 0, \quad
\psi'(v_{k}) \to 1 \qquad\hbox{a.e. in $\R^N$.}
\end{equation}
Then, from \eqref{cut1} and \eqref{stim1} we have that
$\supp w_{R,k} \subset \supp \chi_R \subset B_{R+1}$ and also 
\begin{equation}\label{stim3}
|w_{R,k}| \le \psi(\bar{M}_0) \qquad\hbox{a.e. in $\R^N$.}
\end{equation}
Moreover \eqref{stim2} implies that
\begin{equation}\label{stim4}
w_{R,k} \to 0 \qquad\hbox{a.e. in $\R^N$,}
\end{equation}
while from
\begin{equation}\label{grad}
\nabla w_{R,k} = \psi(v_{k}) \nabla \chi_R + \chi_R \psi'(v_{k}) \nabla v_k\qquad\hbox{a.e. in $\R^N$}
\end{equation}
and \eqref{cc23}--\eqref{stim1} it follows that $w_{R,k} \in X_{B_{R+1}}$.
Hence, for all $k \ge R+1$ we have that
\[
w_{R,k} \in X_{B_k}
\]
so $(iii)$, \eqref{diff} and \eqref{grad} imply that
\begin{equation}  \label{6.40}
\begin{split}
0\ = &\left\langle d\J(u_k), w_{R,k}\right\rangle\
=\int_{B_{R+1}}\psi(v_k) A(x,u_k) |\nabla u_k|^{p-2} \nabla u_k\cdot\nabla\chi_R dx\\
& +\int_{B_{R+1}}\chi_R \psi^{\prime}(v_k) A(x, u_k) |\nabla u_k|^{p-2} \nabla u_k\cdot\nabla v_k dx
+\frac1p \int_{B_{R+1}} A_t(x, u_k) w_{R,k} |\nabla u_k|^p dx\\
& +\int_{B_{R+1}} V(x)|u_k|^{p-2} u_k w_{R,k} dx -\int_{B_{R+1}} g(x, u_k) w_{R,k} dx.
\end{split}
\end{equation}
We note that $(g_0)$--$(g_1)$ together with \eqref{bddX}, \eqref{limqo}, \eqref{stim3}, 
\eqref{stim4} and Dominated Convergence Theorem on the 
bounded set $B_{R+1}$ imply that
\begin{equation}\label{ip2}
\int_{B_{R+1}} g(x,u_k) w_{R,k} dx \to 0.
\end{equation}
Moreover, from assumption $(h_1)$, \eqref{bddX}, \eqref{cut2} and Hölder inequality we have
that
\begin{equation}\label{d1}
\begin{split}
&\int_{B_{R+1}} |\psi(v_k) A(x, u_k) |\nabla u_k|^{p-2} \nabla u_k\cdot\nabla\chi_R| dx 
\le d_1\int_{B_{R+1}} |\psi(v_k)| |\nabla u_k|^{p-1} dx\\
&\qquad\le d_1\|u_k\|_{ B_{R+1}}^{p-1}\left(\int_{B_{R+1}}|\psi(v_k)|^p dx \right)^{\frac1p};
\end{split}
\end{equation}
now, from \eqref{stim1}, \eqref{stim2} and the boundedness of the set $B_{R+1}$, 
by means of Dominated Convergence Theorem, we obtain 
\begin{equation}  \label{puntino}
\int_{B_{R+1}} |\psi(v_k)|^p dx \to 0.
\end{equation}
Hence, estimate \eqref{bddX}, together with \eqref{d1} and \eqref{puntino}, 
guarantees that
\begin{equation}\label{uno}
\int_{B_{R+1}} \psi(v_k) A(x, u_k) |\nabla u_k|^{p-2} \nabla u_k\cdot\nabla\chi_R dx\to 0.
\end{equation}
Furthermore, from $(V_3)$, \eqref{cut1}, Hölder inequality, \eqref{bddX} and \eqref{puntino} it follows that
\begin{equation}  \label{dimenticato}
\begin{split}
\left\vert\int_{B_{R+1}} V(x) |u_k|^{p-2} u_k w_{R,k}dx\right\vert & 
\le C_{R+1}\int_{B_{R+1}} |u_k|^{p-1} |\psi(v_k)| dx\\
&\le C_{R+1} |u_k|_{B_{R+1},p}^{p-1}\left(\int_{B_{R+1}} |\psi(v_k)|^p dx\right)^{\frac1p}\to 0.
\end{split}
\end{equation}
On the other hand, \eqref{bddX}, \eqref{cut1}, assumptions $(h_1)$--$(h_2)$ 
and direct computations ensure the existence of a constant $d_2 > 0$, 
independent of $k$, such that
\[
\begin{split}
\left\vert\int_{B_{R+1}}\chi_R A_t(x, u_k) \psi(v_{k}) |\nabla u_k|^p dx\right\vert
\le\ &\frac{d_2}{\alpha_0}\int_{B_{R+1}} \chi_R A(x,u_k) |\psi(v_{k})| |\nabla u_k|^{p} dx\\
=\ &\frac{d_2}{\alpha_0}\int_{B_{R+1}} \chi_R A(x,u_k) |\psi(v_{k})| |\nabla u_k|^{p-2}\nabla u_k\cdot\nabla v_{k} dx\\
&+\frac{d_2}{\alpha_0}\int_{B_{R+1}} \chi_R A(x, u_k) |\psi(v_{k})| |\nabla u_k|^{p-2}\nabla u_k\cdot\nabla u_\infty dx.
\end{split}
\]
We note that hypothesis $(h_1)$, \eqref{bddX}, \eqref{cut1} and Hölder inequality give
\begin{equation} \label{tre3}
\begin{split}
&\left|\int_{B_{R+1}} \chi_R A(x,u_n)|\psi(v_{k})| |\nabla u_k|^{p-2}\nabla u_k\cdot\nabla u_\infty dx\right|\\
&\qquad\le d_3\left(\int_{B_{R+1}}|\psi(v_{k})|^p |\nabla u_\infty|^p dx\right)^{\frac1p}
\left(\int_{B_{R+1}} |\nabla u_k|^p dx\right)^{\frac{p-1}{p}} \\
&\qquad\le d_4\left(\int_{B_{R+1}}|\psi(v_{k})|^p |\nabla u_\infty|^p dx\right)^{\frac1p}\
\to\ 0
\end{split}
\end{equation}
as \eqref{stim1}, \eqref{stim2} and, again, Dominated Convergence Theorem imply that
\[
\int_{B_{R+1}}|\psi(v_{k})|^p |\nabla u_\infty|^p dx\to 0.
\]
Then, from estimate \eqref{6.40} and \eqref{ip2}, \eqref{uno}--\eqref{tre3}, 
we obtain that
\begin{equation}\label{epsquasi}
\varepsilon_k\ge\int_{B_{R+1}}\chi_R A(x, u_k)h_k|\nabla u_k|^{p-2}\nabla u_k\cdot\nabla v_k dx
\end{equation}
if we set
\[
h_k(x) =\psi^{\prime}(v_k(x))-\frac{d_2}{p\alpha_0} |\psi(v_k(x))| \quad\mbox{ for a.e. } x\in\R^N.
\]
Now, back to \eqref{eq4} and taking $\beta_1=1, \beta_2 =\frac{d_2}{p\alpha_0}$, 
from \eqref{stim1}, \eqref{stim2} and direct computations not only we have that
\begin{equation} \label{stima10}
h_{k}(x) \to 1\quad\hbox{a.e. in $\R^N$} \quad 
\hbox{and}\quad
|h_{k}(x)| \le \psi'(\bar{M}_0) + d_5|\psi(\bar{M}_0)|\quad\hbox{a.e. in $\R^N$,}
\end{equation}
but also
\begin{equation} \label{frac12}
h_{k}(x)>\frac12\quad\mbox{ a.e. in $\R^N$}.
\end{equation}
At last, back to \eqref{epsquasi}, direct computations give
\begin{equation}\label{quasi2}
\begin{split} 
\varepsilon_{k}\ \ge\ &\int_{B_{R+1}} \chi_R A(x, u_k) h_k |\nabla u_k|^{p-2}\nabla u_k\cdot\nabla v_{k} dx\\
=\ &\int_{B_{R+1}} \chi_R A(x,u_\infty) |\nabla u_\infty|^{p-2}\nabla u_\infty\cdot\nabla v_{k}dx\\ 
& +\int_{B_{R+1}}\chi_R(h_{k} A(x, u_k) -A(x,u_\infty))|\nabla u_\infty|^{p-2}\nabla u_\infty\cdot\nabla v_{k} dx\\ 
& +\int_{B_{R+1}} \chi_R A(x, u_k) h_k(|\nabla u_k|^{p-2} \nabla u_k -|\nabla u_\infty|^{p-2}\nabla u_\infty)\cdot\nabla v_{k} dx,
\end{split}
\end{equation}
where from \eqref{cc2} it follows that
\[ 
\int_{B_{R+1}} \chi_R A(x,u_\infty) |\nabla u_\infty|^{p-2}\nabla u_\infty \cdot\nabla v_{k}dx\ \to\ 0.
\]
Now, from Hölder inequality, we have that
\begin{equation} \label{hkDC}
\begin{split}
&\int_{B_{R+1}} \chi_R |(h_k A(x, u_k) -A(x,u_\infty))|\nabla u_\infty|^{p-2}\nabla u_\infty\cdot\nabla v_{k}| dx\\
&\quad\le\left(\int_{B_{R+1}}|h_{k} A(x, u_k) -A(x,u_\infty)|^{\frac{p}{p-1}}|\nabla u_\infty|^p dx\right)^{\frac{p-1}{p}}
\|v_{k}\|_{B_{R+1}}\ \to\ 0,
\end{split}
\end{equation}
since assumption $(h_0)$, \eqref{limqo}, \eqref{cut1} and \eqref{stima10} imply that 
\[
h_{k}A(x,u_k) -A(x,u_\infty) \to 0 \quad \hbox{a.e. in $\R^N$,}
\]
while \eqref{bddX}, \eqref{stima10}, Proposition \ref{uinfty} and $(h_1)$ give 
\[
|h_{k} A(x, u_k) -A(x,u_\infty)|^{\frac{p}{p-1}}|\nabla u_\infty|^p\le d_6 |\nabla u_\infty|^p
\quad \hbox{a.e. in $\R^N$.}
\] 
Hence, Dominated Convergence Theorem applies which, 
together with \eqref{cc21}, guarantees that the limit in \eqref{hkDC} holds.\\
Moreover, using the previous estimates in \eqref{quasi2}, from \eqref{cut1}, \
\eqref{frac12}, hypothesis $(h_2)$ and the strong convexity of the power function 
with exponent $p>1$, we have that
\[
\varepsilon_{k}\ge\frac{\alpha_0}{2}\int_{B_{R+1}} (|\nabla u_k|^{p-2}\nabla u_k 
-|\nabla u_\infty|^{p-2}\nabla u_\infty)\cdot\nabla v_{k} dx \ge 0,
\]
which implies that
\[
\int_{B_{R+1}} (|\nabla u_k|^{p-2}\nabla u_k -|\nabla u_\infty|^{p-2}\nabla u_\infty)\cdot\nabla v_{k} dx\
\to\ 0;
\]
hence, \eqref{strong2} follows from \eqref{weakV}.
\end{proof}

\begin{proposition}\label{critt}
We have that
\begin{equation}\label{crit0}
\langle d\J(u_\infty),\varphi\rangle = 0 \quad \hbox{for all $\varphi \in C_c^\infty(\R^N)$}
\end{equation}
with $C_c^\infty(\R^N) = \{\varphi \in C^\infty(\R^N):\ \supp\varphi\subset\subset \R^N\}$.
Hence, $d\J(u_\infty)= 0$ in $X$.  
\end{proposition}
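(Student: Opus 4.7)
The proof naturally splits into two steps: proving the identity for test functions $\varphi \in C_c^\infty(\R^N)$, then extending by approximation to all of $X$.

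\textbf{Step 1.} Fix $\varphi \in C_c^\infty(\R^N)$ and choose $R \ge 1$ with $\supp\varphi \subset B_R$. For every $k \ge R$ we have $\varphi \in X_{B_k}$, so property $(iii)$ of the sequence $(u_k)_k$ yields $\langle d\J(u_k),\varphi\rangle = 0$. The plan is to pass to the limit $k \to +\infty$ in the four terms of \eqref{diff}. All integrals reduce to integrals over $B_R$. By Proposition \ref{limwpr} we have $u_k \to u_\infty$ strongly in $W_V^{1,p}(B_R)$, so (up to a subsequence) $u_k \to u_\infty$ and $\nabla u_k \to \nabla u_\infty$ a.e. on $B_R$; the sequence is also uniformly bounded in $L^\infty$ by Proposition \ref{boundX}. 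For the principal term, $A(x,u_k)\to A(x,u_\infty)$ a.e. with a uniform bound from $(h_1)$, while $|\nabla u_k|^{p-2}\nabla u_k \to |\nabla u_\infty|^{p-2}\nabla u_\infty$ strongly in $L^{p/(p-1)}(B_R)$ (via a.e.\ convergence and equi-integrability coming from strong $L^p$ convergence of the gradients). For the term with $A_t$, $(h_1)$ gives a uniform $L^\infty$ bound on $A_t(x,u_k)$, $\varphi$ is bounded, and $|\nabla u_k|^p \to |\nabla u_\infty|^p$ in $L^1(B_R)$, so dominated convergence applies. For the potential term, $(V_3)$ makes $V$ bounded on $B_R$, and the integrand is dominated by a constant times $|u_k|^{p-1}|\varphi|$, converging a.e.; dominated convergence concludes. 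Finally, $(g_0)$--$(g_1)$ and the uniform $L^\infty$ bound handle the $g$-term the same way. Thus $\langle d\J(u_\infty),\varphi\rangle = 0$ for all $\varphi \in C_c^\infty(\R^N)$.

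\textbf{Step 2.} To upgrade to all $v \in X$, fix such a $v$ and construct a sequence $\varphi_n \in C_c^\infty(\R^N)$ with $\varphi_n \to v$ in $W_V^{1,p}(\R^N)$, $\varphi_n \to v$ a.e., and $|\varphi_n|_\infty \le |v|_\infty$ for every $n$ (by multiplying $v$ by a cutoff $\chi_n$ and mollifying, neither of which increases the $L^\infty$-norm; convergence in $W_V^{1,p}$ follows from $(V_3)$ applied on the bounded supports). Since Step 1 gives $\langle d\J(u_\infty),\varphi_n\rangle = 0$, it remains to show each of the four terms in \eqref{diff}, evaluated on $u_\infty$, passes to the limit as $n\to +\infty$. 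For the first term, $A(x,u_\infty)|\nabla u_\infty|^{p-2}\nabla u_\infty \in L^{p/(p-1)}(\R^N)$ (from $(h_1)$ and $u_\infty \in X$), and $\nabla\varphi_n \to \nabla v$ in $L^p$. For the third, Hölder with the splitting $V = V^{1/p}V^{(p-1)/p}$ bounds $|\int V|u_\infty|^{p-2}u_\infty(\varphi_n-v)dx|$ by $|u_\infty|_{V,p}^{p-1}|\varphi_n-v|_{V,p} \to 0$. For the fourth, the bound $|g(x,u_\infty)| \le (a_1 + a_2|u_\infty|_\infty^{q-p})|u_\infty|^{p-1}$ puts $g(\cdot,u_\infty) \in L^{p/(p-1)}(\R^N)$, and $\varphi_n \to v$ in $L^p(\R^N)$ via \eqref{WVem}. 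For the second term, $A_t(x,u_\infty)|\nabla u_\infty|^p \in L^1(\R^N)$ by $(h_1)$ and $u_\infty \in W_V^{1,p}$, while the uniform bound $|\varphi_n| \le |v|_\infty$ together with $\varphi_n \to v$ a.e.\ allows dominated convergence. Collecting these limits yields $\langle d\J(u_\infty),v\rangle = 0$.

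\textbf{Main obstacle.} The delicate point is Step 2, since $C_c^\infty(\R^N)$ is \emph{not} dense in $X$ for the intersection norm $\|\cdot\|_X$ (the $L^\infty$ part cannot be approximated uniformly by compactly supported smooth functions). The argument must therefore avoid norm convergence in $X$ and instead exploit the concrete structure of \eqref{diff}: each individual term is continuous with respect to a weaker mode of convergence (strong convergence in $W_V^{1,p}$ plus uniform $L^\infty$-bound and a.e.\ convergence), which is exactly what a truncation-and-mollification procedure provides. This is the same philosophy that underlies Proposition \ref{C1}.
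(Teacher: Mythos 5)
Your Step 1 is essentially the paper's own argument: fix $\varphi$ with $\supp\varphi\subset B_R$, use property $(iii)$ for $k\ge R$, and pass to the limit in each of the four terms of \eqref{diff} over $B_R$, using the strong convergence of Proposition \ref{limwpr}, the uniform $L^\infty$ bound of Proposition \ref{boundX}, $(h_1)$, $(V_3)$, $(g_0)$--$(g_1)$ and dominated convergence; the paper organizes this through the explicit splitting \eqref{crit1} and the elementary inequalities of Lemma \ref{lemmavett} and Lemma \ref{lemmaOm}, while you invoke a.e.\ convergence of the gradients plus equi-integrability (Vitali), which is an equivalent and perfectly acceptable route (just note that a.e.\ convergence of $\nabla u_k$ requires passing to a subsequence, which is harmless since the target quantity $\langle d\J(u_\infty),\varphi\rangle$ is a fixed number, and that in the potential and $g$ terms the dominating function must be taken independent of $k$, which your uniform $L^\infty$ bound indeed provides). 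Where you genuinely diverge is the final upgrade to $d\J(u_\infty)=0$ on all of $X$: the paper disposes of this in one line by appealing to ``the density of $C_c^\infty(\R^N)$ in $X$'', whereas you correctly observe that $C_c^\infty(\R^N)$ is \emph{not} dense in $X$ for the intersection norm $\|\cdot\|_X$ (the $L^\infty$ component obstructs it) and instead approximate a fixed $v\in X$ by truncation and mollification, keeping $|\varphi_n|_\infty\le|v|_\infty$, $\varphi_n\to v$ in $W_V^{1,p}(\R^N)$ and a.e., and then pass to the limit term by term in $\langle d\J(u_\infty),\varphi_n\rangle$ using the concrete structure of \eqref{diff} (each term is continuous under this weaker mode of convergence, in the spirit of Proposition \ref{C1}). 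This buys rigor: your version makes explicit the argument that the paper's density claim only suggests, and it is the honest way to justify the last sentence of the statement; the paper's version buys brevity at the cost of precision. I see no gap in your proposal.
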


\begin{proof}
Taking $\varphi \in C_c^\infty(\Omega)$, a radius $R \ge 1$ exists
such that $\supp \varphi \subset B_R$. Thus, for all $k\ge R$
we have that $\varphi \in X_k$ so $(iii)$ applies and
$\langle d\J(u_k),\varphi\rangle = 0$.
Furthermore, direct computations give
\begin{equation}\label{crit1}
\begin{split}
0\ \le\ &|\langle d\J(u_\infty),\varphi\rangle| =
|\langle d\J(u_k),\varphi\rangle - \langle d\J(u_\infty),\varphi\rangle|\\
\le\ &\int_{B_R} |A(x, u_k)|\big| |\nabla u_k|^{p-2} \nabla u_k -|\nabla u_\infty|^{p-2}\nabla u_\infty\big| |\nabla\varphi| dx\\
& +\int_{B_R}|A(x, u_k)-A(x,u_\infty)| |\nabla u_\infty|^{p-1} |\nabla \varphi| dx\\
& + \frac{|\varphi|_{\infty}}{p}\int_{B_R} |A_t(x, u_k)| \big| |\nabla u_k|^p -|\nabla u_\infty|^p\big| dx \\
& + \frac{|\varphi|_{\infty}}{p}\int_{B_R}|A_t(x, u_k) -A_t(x,u_\infty)| |\nabla u_\infty|^p dx\\
& +|\varphi|_{\infty}\int_{B_R} V(x) \big| |u_k|^{p-2} u_k -|u_\infty|^{p-2} u_\infty\big| dx + 
|\varphi|_\infty \int_{B_R}\big|g(x,u_k) - g(x,u_\infty)\big| dx.
\end{split}
\end{equation}
We note that the boundedness of $B_R$ together with $(g_0)$--$(g_1)$, 
\eqref{bddX}, \eqref{limqo} and Proposition \ref{uinfty} allow us to apply
the Dominated Convergence Theorem and 
\[
\int_{B_R}\big|g(x,u_k) - g(x,u_\infty)\big| dx \to 0.
\]
Moreover, by using some arguments similar to those ones in the proof 
of Proposition \ref{C1} we have that
assumption $(V_3)$ and \eqref{strong} imply that
\[
\int_{B_R} V(x) \big| |u_k|^{p-2} u_k -|u_\infty|^{p-2} u_\infty\big| dx\ \to\ 0.
\]
On the other hand, by means of $(h_1)$, Hölder inequality and direct computations, we obtain 
\begin{equation}\label{A1}
\begin{split}
&\int_{B_R} |A(x, u_k)|\big| |\nabla u_k|^{p-2} \nabla u_k -|\nabla u_\infty|^{p-2}\nabla u_\infty\big| |\nabla\varphi| dx\\
&\qquad\qquad\qquad\le d_1 \left(\int_{B_R}\big||\nabla u_k|^{p-2}\nabla u_k -
|\nabla u_\infty|^{p-2} u_\infty\big|^{\frac{p}{p-1}} dx\right)^{\frac{p-1}{p}},
\end{split}
\end{equation}
for a suitable constant $d_1 = d_1(|\varphi|_{\infty}) > 0$. 
Now, if $p>2$, then Lemma \ref{lemmavett}, again Hölder inequality with conjugate exponents $p-1$ 
and $\frac{p-1}{p-2}$, \eqref{bddX} and direct computations imply that
\begin{equation}\label{A2}
\left(\int_{B_R}\big||\nabla u_k|^{p-2}\nabla u_k -
|\nabla u_\infty|^{p-2} u_\infty\big|^{\frac{p}{p-1}} dx\right)^{\frac{p-1}{p}}
\le d_2 \|u_k - u_\infty\|_{B_R}.
\end{equation}
Conversely, if $1<p\le 2$ again Lemma \ref{lemmavett} applies so that
\begin{equation}\label{A3}
\left(\int_{B_R}\big||\nabla u_k|^{p-2}\nabla u_k -|\nabla u_\infty|^{p-2} u_\infty\big|^{\frac{p}{p-1}} dx\right)^{\frac{p-1}{p}}
\le \|u_k - u_\infty\|_{B_R}^{p-1}.
\end{equation}
Thus, from \eqref{A1}--\eqref{A3} and \eqref{strongwpr} it follows that
\begin{equation}\label{Aprimo}
\int_{B_R} |A(x, u_k)|\big| |\nabla u_k|^{p-2} \nabla u_k -|\nabla u_\infty|^{p-2}\nabla u_\infty\big| |\nabla\varphi| dx \
\to\ 0.
\end{equation}
Furthermore, from assumption $(h_1)$, \eqref{bddX}, \eqref{strongwpr} and Lemma \ref{lemmaOm} we have that
\[
\int_{B_R} |A_t(x, u_k)| \big| |\nabla u_k|^p -|\nabla u_\infty|^p\big| dx\ \to\ 0,
\]
while, from Dominated Convergence Theorem it results
\[
\int_{B_R}|A_t(x, u_k) -A_t(x,u_\infty)| |\nabla u_\infty|^p dx\ \to\ 0
\]
as $(h_0)$ and \eqref{strongwpr} imply that
\[
|A_t(x, u_k) -A_t(x,u_\infty)| |\nabla u_\infty|^p \to 0\quad\mbox{ a.e. in } \R^N
\]
and $(h_1)$, \eqref{bddX} give
\[
|A_t(x, u_k) -A_t(x,u_\infty)| |\nabla u_\infty|^p \le d_3 |\nabla u_\infty|^p \in L^1(B_R).
\]
So, summing up, \eqref{crit1} guarantees that \eqref{crit0} holds and 
the proof follows from 
the density of $C_c^{\infty}(\R^N)$ in $X$.
\end{proof}

\begin{proof}[Proof of Theorem \ref{ThExist}]
From Proposition \ref{critt} we have that the statement of Theorem \ref{ThExist} is true 
if we prove $u_\infty \not\equiv 0$.\\ 
Arguing by contradiction, assume that $u_\infty=0$. Hence, from assumption $(g_1)$ with $q<p^*$ and \eqref{strong}
we have that
\begin{equation}\label{gto0}
\int_{\R^N} g(x, u_k)u_k dx\ \to\ 0,
\end{equation}
which, together with assumption $(g_3)$, ensures that
\begin{equation}\label{Gto0}
\int_{\R^N} G(x, u_k) dx \to 0.
\end{equation}
Furthermore, from \eqref{diff}, $(iii)$, $(h_4)$ and also $(h_2)$ it results
\[
\begin{split}
0\ =\ &\left\langle d\J(u_k), u_k\right\rangle =\int_{\R^N} A(x, u_k)|\nabla u_k|^p dx
 +\frac1p\int_{\R^N} A_t(x, u_k) u_k |\nabla u_k|^p dx\\
& +\int_{\R^N} V(x) |u_k|^p dx -\int_{\R^N} g(x, u_k) u_k dx\\
\ge\ & \frac{\alpha_2\alpha_0}{p}\int_{\R^N} |\nabla u_k|^p dx+\int_{\R^N} V(x) |u_k|^p dx -\int_{\R^N} g(x, u_k) u_k dx,
\end{split}
\]
which implies that
\begin{equation} \label{weigh0}
\|u_k\|_V \to 0\quad\mbox{ as } k\to +\infty
\end{equation}
by means of \eqref{weightnorm} and \eqref{gto0}.
Hence, from \eqref{funct}, $(h_1)$, $\eqref{bddX}$, \eqref{Gto0} and \eqref{weigh0} we infer 
\[
\J(u_k) \le d_1 \|u_k\|_V^p -\int_{\R^N} G(x, u_k) dx \to 0
\]
in contradiction with estimate $(ii)$. 
\end{proof}

\begin{remark}
Unlike the results in \cite{BCS2015,BCS2016,AS} and other similar works, 
here managing our problem directly on $\R^N$
seems quite hard. In fact, the choice of the working space in \eqref{Xdefn} 
requires that the $(wCPS)$ condition holds if $u\in L^{\infty}(\R^N)$ too, 
but we have no knowledge of a result similar to Lemma \ref{tecnico} 
but settled in the whole Euclidean space $\R^N$.
\end{remark}


\end{document}